%% LyX 2.3.6.2 created this file.  For more info, see http://www.lyx.org/.
%% Do not edit unless you really know what you are doing.
\documentclass[11pt,reqno]{article}
\usepackage{lmodern}
\usepackage[T1]{fontenc}
\usepackage[utf8]{inputenc}
\usepackage[a4paper]{geometry}
\geometry{verbose,tmargin=2.5cm,bmargin=2.5cm,lmargin=3cm,rmargin=3cm}
\usepackage{color}
\usepackage{mathtools}
\usepackage{amsmath}
\usepackage{amsthm}
\usepackage{amssymb}
\usepackage{stmaryrd}
\usepackage{setspace}
\usepackage{microtype}
\setstretch{1.1}
\usepackage[unicode=true,
 bookmarks=false,
 breaklinks=false,pdfborder={0 0 1},backref=section,colorlinks=true]
 {hyperref}
\hypersetup{
 linkcolor=blue}

\makeatletter
%%%%%%%%%%%%%%%%%%%%%%%%%%%%%% Textclass specific LaTeX commands.
\numberwithin{equation}{section}
\numberwithin{figure}{section}
\theoremstyle{plain}
\newtheorem{thm}{\protect\theoremname}[section]
\theoremstyle{remark}
\newtheorem{rem}[thm]{\protect\remarkname}
\theoremstyle{plain}
\newtheorem{cor}[thm]{\protect\corollaryname}
\theoremstyle{definition}
\newtheorem{example}[thm]{\protect\examplename}
\theoremstyle{plain}
\newtheorem{prop}[thm]{\protect\propositionname}
\theoremstyle{plain}
\newtheorem{lem}[thm]{\protect\lemmaname}

%%%%%%%%%%%%%%%%%%%%%%%%%%%%%% User specified LaTeX commands.

% links
\usepackage[pdftex]{graphicx}
\usepackage{float}\usepackage{verbatim}
\usepackage{mathscinet}

%Environments

%Commands

\newcommand{\N}{\mathbb{N}}
\newcommand{\R}{\mathbb{R}}
\newcommand{\E}{\mathbb{E}}
\newcommand{\I}{\mathbb{I}}
\renewcommand{\P}{\mathbb{P}}

\newcommand{\HS}{\mathrm{HS}}

\newcommand{\df}{\mathrm{d}}

\DeclareMathOperator{\Vol}{Vol}

\DeclareMathOperator{\End}{End}

\DeclareMathOperator{\Cay}{Cay}

\usepackage{scalerel,stackengine}
\stackMath
\newcommand\widecheck[1]{%
\savestack{\tmpbox}{\stretchto{%
  \scaleto{%
    \scalerel*[\widthof{\ensuremath{#1}}]{\kern-.6pt\bigwedge\kern-.6pt}%
    {\rule[-\textheight/2]{1ex}{\textheight}}%WIDTH-LIMITED BIG WEDGE
  }{\textheight}% 
}{0.5ex}}%
\stackon[1pt]{#1}{\scalebox{-1}{\tmpbox}}%
}

\date{}

\makeatother

\providecommand{\corollaryname}{Corollary}
\providecommand{\examplename}{Example}
\providecommand{\lemmaname}{Lemma}
\providecommand{\propositionname}{Proposition}
\providecommand{\remarkname}{Remark}
\providecommand{\theoremname}{Theorem}

\begin{document}
\global\long\def\C{\mathbb{C}}%
\global\long\def\P{\mathbb{P}}%
\global\long\def\E{\mathbb{E}}%
\global\long\def\Cay{\mathrm{Cay}}%
\global\long\def\D{\mathfrak{D}}%
\global\long\def\G{\mathcal{G}}%

\global\long\def\Vol{\mathrm{Vol}}%
\global\long\def\B{\mathcal{B}}%
\global\long\def\df{\mathrm{def}}%
\global\long\def\eqdf{\stackrel{\df}{=}}%
\global\long\def\N{\mathbf{N}}%
\global\long\def\I{\mathcal{I}}%
\global\long\def\d{\mathrm{d}}%
\global\long\def\tr{\mathrm{tr}}%
\global\long\def\End{\mathrm{End}}%
\global\long\def\R{\mathbb{R}}%

\title{Quantum Unique Ergodicity for Cayley graphs of quasirandom groups}
\author{Michael Magee, Joe Thomas, and Yufei Zhao}

\maketitle

\begin{abstract}
A finite group $G$ is called $C$-quasirandom (by Gowers) if all
non-trivial irreducible complex representations of $G$ have dimension
at least $C$. For any unit $\ell^{2}$ function on a finite group
we associate the \emph{quantum probability measure} on the group given
by the absolute value squared of the function. 

We show that if a group is highly quasirandom, in the above sense,
then any Cayley graph of this group has an orthonormal eigenbasis
of the adjacency operator such that the quantum probability measures
of the eigenfunctions put close to the correct proportion of their
mass on suitably selected subsets of the group that are not too small. 

\end{abstract}
{\footnotesize{}\tableofcontents{}}{\footnotesize\par}

\section{Introduction}

The main question of quantum chaos is to what extent `chaotic' features
of the geodesic flow on a manifold (for example, ergodicity, exponential
mixing, etc.) manifest themselves in the corresponding quantized system;
that is, the $L^{2}$ Laplace-Beltrami operator and its eigenvalues
and eigenfunctions. One of the main questions here is whether the
quantum probability measures associated to eigenfunctions of the Laplacian
have unique weak-$*$ limits (semiclassical measures) as the corresponding
eigenvalue tends to infinity. If there is a unique limit, the manifold
is called quantum uniquely ergodic. 

In this paper, we work with \emph{graphs} instead of manifolds and
prove results in the spirit of quantum unique ergodicity for certain
families $\{\G_{i}\}_{i\in\I}$, $\I\subset\N$ of $d$-regular graphs,
with $d\geq3$ fixed. We will always write $\G_{i}$ to refer to such
a family of graphs. We write $V_{i}$ for the vertex set of $\G_{i}$,
let $N\eqdf|V_{i}|$ and assume $N\to\infty$ as $i\to\infty$. Each
$\G_{i}$ has an adjacency matrix that has rows and columns indexed
by $V_{i}$, a $1$ in entry $(x,y)$ if there is an edge between
$x$ and $y$, and 0 otherwise; we view this as an operator on $\ell^{2}(V_{i})$.
\emph{In this paper, $\ell^{2}$ norms will be defined with respect
to the counting measure.}

Given an element $\varphi\in\ell^{2}(V_{i})$ with $\|\varphi\|_{\ell^{2}}=1$,
which will usually be an eigenfunction of the adjacency operator of
$\G_{i}$, we\emph{ }associate to $\varphi$ the \emph{quantum probability
measure}\footnote{From the point of view of quantum mechanics, this is the probability
density function.} $\mu_{\varphi}$ on $V_{i}$ defined by
\[
\mu_{\varphi}\eqdf\sum_{v\in V_{i}}|\varphi(v)|^{2}\delta_{v},
\]
where $\delta_{v}$ is the unit mass atom at $v$. Note that $\|\varphi\|_{\ell^{2}}=1$
implies $\mu_{\varphi}$ is a probability measure.

We will say \emph{quantum unique ergodicity }(QUE) holds for a sequence
of adjacency operator eigenfunctions $\varphi_{i}\in\ell^{2}(V_{i})$
with $\|\varphi_{i}\|_{\ell^{2}}=1$ and a sequence of subsets $A_{i}\subset V_{i}$
if
\[
\mu_{\varphi_{i}}[A_{i}]\to\frac{|A_{i}|}{|V_{i}|}=\frac{|A_{i}|}{N}
\]
as $i\to\infty$. It is very hard in general to establish this bound
for all $A_{i}$, so we will restrict to $A_{i}$ that are not too
small. 

Suppose that $G$ is a finite group and $S$ is a symmetric subset
of $G$, then we will denote the Cayley graph associated to the pair
$(G,S)$ by $\Cay(G,S)$. We write $\hat{G}$ for the equivalence
classes of irreducible representations of $G$, and define
\[
\D(G)\eqdf\min_{(\rho,V)\in\hat{G}-\mathrm{triv}}\dim V;
\]
i.e. the smallest dimension of a non-trivial representation of $G$.
Then in the language of Gowers from \cite{Go08}, $G$ is $\D(G)$-\emph{quasirandom}\footnote{Before the formal naming of this property by Gowers, the property
of a group $G$ being $|G|^{\delta}$-quasirandom was used to prove
eigenvalue bounds in works of Sarnak and Xue \cite{SX} and Bourgain
and Gamburd \cite{BGEXPAND}. }\emph{. }The first main theorem of the paper is the following.
\begin{thm}
\label{thm:main-thm-there-exists} Let $G_{i}$ be finite groups with
$|G_{i}|\stackrel{i\to\infty}{\to}\infty$, $S_{i}\subseteq G_{i}$
be symmetric subsets $(S_{i}=S_{i}^{-1})$, $\G_{i}=\Cay(G_{i},S_{i})$
and $t_{i}>0$. Moreover, let $M_{i}\in\N$ be such that
\begin{equation}
2M_{i}\sum_{(\pi,V)\in\hat{G_{i}}-\mathrm{triv}}(\dim V)^{2}\left(6e^{-\frac{t_{i}\sqrt{\dim V}}{64}}+2e^{-\frac{\dim V}{12}}\right)<1,\label{eq:prob-req}
\end{equation}
 and let $f_{i}^{j}:V_{i}\to\R$ be any collection of functions for
$j=1,...,M_{i}$ and $i\in\N$. Then, there exist orthonormal bases
$\mathcal{B}_{i}$ of $\ell^{2}(G_{i})$ of real-valued eigenfunctions
of $\mathcal{G}_{i}$ such that for every $\varphi\in\mathcal{B}_{i}$
and $j=1,\ldots,M_{i}$, 
\begin{equation}
\left|\mu_{\varphi}[f_{i}^{j}]-\frac{\sum_{g\in G_{i}}f_{i}^{j}(g)}{|G_{i}|}\right|\leq t_{i}\frac{\|f_{i}^{j}\|_{\ell^{2}}}{\sqrt{|G_{i}|}}.\label{eq:ell2-bound}
\end{equation}
If $f_{i}^{j}=\boldsymbol{1}_{A_{i}^{j}}$ for some subsets $A_{i}^{j}\subseteq V_{i}$
then
\begin{equation}
\left|\mu_{\varphi}[A_{i}^{j}]-\frac{|A_{i}^{j}|}{|G_{i}|}\right|\leq t_{i}\frac{\sqrt{|A_{i}^{j}|}}{\sqrt{|G_{i}|}};\label{eq:mainthm-cmplex-eq}
\end{equation}
which in particular implies that $\mu_{\varphi}[A_{i}^{j}]$ is asymptotic
to $\frac{|A_{i}^{j}|}{|G_{i}|}$ as $i\to\infty$ whenever $\frac{t_{i}^{2}|G_{i}|}{|A_{i}^{j}|}=o_{i\to\infty}(1)$.
\end{thm}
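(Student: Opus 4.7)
The plan is to combine the Peter--Weyl decomposition of $\ell^2(G_i)$ with a probabilistic construction of the eigenbasis: within each adjacency eigenspace I would choose a Haar-uniform random orthonormal basis and show that each basis vector satisfies \eqref{eq:ell2-bound} with high probability, after which a union bound together with assumption \eqref{eq:prob-req} delivers existence of a good basis.

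First I would decompose $\ell^2(G_i) = \bigoplus_\pi W_\pi$ into left-isotypic components, where $W_\pi \cong V_\pi \otimes V_\pi^*$ has dimension $(\dim V_\pi)^2$. The adjacency operator $A_i$, acting as right convolution by $\sum_{s\in S_i}\delta_s$, commutes with left translation and so preserves each $W_\pi$; concretely it acts there as $\mathrm{id}\otimes \pi^*(\sum_s s)$, self-adjoint by the symmetry of $S_i$. Its eigenspaces inside $W_\pi$ have the form $V_\pi\otimes E_\lambda$; the key structural observation is that each such $W:=V_\pi\otimes E_\lambda$ is \emph{left-invariant} as a subspace of $\ell^2(G_i)$. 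For the trivial isotypic the constant eigenfunction satisfies \eqref{eq:ell2-bound} with equality. For a non-trivial eigenspace $W$ of dimension $D\geq\dim V_\pi$, left-invariance plus transitivity of the left action forces $P_W(v,v) = D/|G_i|$ to be constant in $v$, so for any random unit $\varphi\in W$ one has $\mathbb{E}[|\varphi(v)|^2]=1/|G_i|$ and hence $\mathbb{E}[\mu_\varphi[f]] = |G_i|^{-1}\sum_v f(v)$, leaving only fluctuations to control.

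To handle the fluctuations, write $\mu_\varphi[f] = \langle \varphi,H\varphi\rangle$ with $H := P_W M_f P_W$, and realise $\varphi = g/\|g\|$ for $g$ a standard Gaussian in $W$. A chi-squared tail confines $\|g\|^2\in[D/2,2D]$ outside an event of probability $\leq 2e^{-D/12}$, yielding the second summand of the per-basis-vector failure bound. On this good event, Hanson--Wright applied to $X := \langle g,Hg\rangle$ controls its deviation from $\mathrm{tr}\,H = D\sum_v f(v)/|G_i|$. The central input is the Hilbert--Schmidt estimate
\[
\|H\|_{\mathrm{HS}}^2 = \sum_{v,w}f(v)f(w)|P_W(v,w)|^2 \leq \|f\|_{\ell^2}^2 \cdot \frac{D}{|G_i|},
\]
obtained by writing $P_W(v,w) = p(v^{-1}w)$ (left-invariance), applying Cauchy--Schwarz in the inner sum, and using $\sum_u|p(u)|^2 = P_W(e,e) = D/|G_i|$. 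The same bound dominates $\|H\|_{\mathrm{op}}$, so Hanson--Wright at deviation $D t_i\|f\|_{\ell^2}/\sqrt{|G_i|}$ in $X$ yields rate $\min(D t_i^2,\, t_i\sqrt{D}) = t_i\sqrt{D}$ in the relevant regime $t_i\sqrt{D}\geq 1$, producing the term $6e^{-t_i\sqrt{D}/64}$.

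A union bound over the $M_i$ test functions and the at most $(\dim V_\pi)^2$ basis vectors inside each non-trivial $W_\pi$, combined with \eqref{eq:prob-req}, shows the total failure probability is less than $1$, so a valid basis $\mathcal{B}_i$ exists. To enforce real-valuedness I would carry out the whole construction inside the real $A_i$-invariant subspace $\ell^2(G_i;\mathbb{R})$, pairing each complex non-self-conjugate irrep with its conjugate into real-type isotypics; the estimates above are unaffected. The indicator-function estimate \eqref{eq:mainthm-cmplex-eq} is immediate from \eqref{eq:ell2-bound} via $\|\mathbf{1}_{A}\|_{\ell^2} = \sqrt{|A|}$. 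The main obstacle is the Hilbert--Schmidt bound: its crucial $\sqrt{D/|G_i|}$ factor (versus the naive $\|f\|_{\ell^2}$) is precisely what yields the $\sqrt{D}$ scaling in the exponent, and it rests essentially on the left-invariance of the eigenspace $W$.
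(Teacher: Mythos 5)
Your argument is a genuinely different route from the paper's, and it is conceptually sound. The paper works inside each $\check V\otimes V$ factor via explicit matrix coefficient expansions (Proposition \ref{prop:errorboundreal}), proves bespoke $\chi^2$ tail bounds (Lemmas \ref{lem:large norm}, \ref{lem:large-deviations}), and then applies them to $\langle\beta, ue_k\otimes\widecheck{ue_k}\rangle$-type quantities with three separate cases according to the type of $\pi$. You instead work directly with the projection $P_W$ onto an adjacency eigenspace and exploit two clean facts: left-invariance forces $P_W(v,w)=p(v^{-1}w)$, and idempotence gives $\sum_u|p(u)|^2=p(e)=D/|G|$. This makes the Hilbert--Schmidt bound $\|P_W M_f P_W\|_{\HS}^2\le \|f\|_{\ell^2}^2 D/|G|$ fall out with no Schur orthogonality computation, and then Hanson--Wright plus a $\chi^2$ lower tail replaces the paper's diagonalisation-by-conjugation argument. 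Your bound's exponent involves $\sqrt{D}$ with $D$ the full eigenspace dimension (which is $\ge\dim V_\pi$), so it is formally at least as strong per-vector as the paper's $\sqrt{\dim V}$, and the union bound over $(\dim V_\pi)^2$ basis vectors per isotypic recovers condition \eqref{eq:prob-req} up to constants. This is a real simplification of the core of the argument.

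Two genuine gaps. First, the constants: Theorem \ref{thm:main-thm-there-exists} is stated with the explicit hypothesis \eqref{eq:prob-req} involving $6$, $2$, $1/64$ and $1/12$, and these numbers are an output of the paper's hand-crafted $\chi^2$ estimates. A generic Hanson--Wright citation yields an unspecified universal constant, so as written your proof establishes a theorem of the same shape with possibly worse constants, which is not literally the stated result (since a worse constant makes \eqref{eq:prob-req} insufficient). To close this you would need to carry out the Hanson--Wright argument explicitly for Gaussian quadratic forms with the same care the paper devotes to Lemma \ref{lem:large-deviations}; given that both approaches reduce to tails of sums of centred $\chi^2$'s this is plausible but is where the actual work would go. Second, the sentence dispatching real-valuedness is far too quick. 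The reduction to $\ell^2(G;\R)$ changes the geometry of the eigenspace in a type-dependent way: for non-self-dual $\pi$ one must pass to the real form of the doubled isotypic $(\check V\otimes V)\oplus(V\otimes\check V)$, for real $\pi$ one works in $V_J\otimes\check V_J$ with an orthogonal rather than unitary randomisation, and for quaternionic $\pi$ the conjugate-linear $J$ pairs the eigenvectors. The paper spends all of Section \ref{sec:Random-basis-construction} on exactly this, and the exponent $-\dim V/12$ versus $-\dim V/6$ in the final bound is itself a consequence of the real versus complex $\chi^2$ degrees of freedom. You would need to check, in each of the three cases, that the real adjacency eigenspace is still left-invariant of real dimension $\ge\dim_\C V_\pi$, that the $p(e)=D/|G|$ identity persists over $\R$, and that the $\chi^2$ degree count matches the claimed $2e^{-\dim V/12}$. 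Finally, a small omission: you should reduce to $f$ of mean zero before applying Hanson--Wright so that $\tr H=0$; otherwise the normalisation error $\tr H(\|g\|^{-2}-D^{-1})$ contaminates the deviation estimate.
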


\begin{rem}
The proof of Theorem \ref{thm:main-thm-there-exists} is slightly
easier if one only wants complex orthonormal eigenbases; see Remark
\ref{rem:complex-eiengbases} at the end of the paper. In this case,
one can also take the functions $f_{i}^{j}$ to be complex-valued.
\end{rem}

The condition \eqref{eq:prob-req} involving $M_{i}$ and $t_{i}$
displays a dependence between the desired strength of the QUE bound
in \eqref{eq:ell2-bound}, and the number of functions that one simultaneously
wishes the bound to hold for. With knowledge on the size and number
of irreducible representations of the group, one can be more precise
with values for $t_{i}$ and $M_{i}.$ 

The most simple case of this is as follows. For groups with $\mathfrak{D}(G)\geq\log^{2}(|G|)$
one can obtain at least logarithmic improvement in \eqref{eq:ell2-bound}
while taking the number of functions to be polynomial in the size
of the group.
\begin{cor}
\label{cor:que-large-qr}Let $\varepsilon>0$, and suppose that $G$
is a finite group satisfying $\mathfrak{D}(G)\geq\log^{2}(|G|)$.
Moreover, let $S\subseteq G$ be a symmetric subset and $\mathcal{G}=\mathrm{Cay}(G,S)$.
Then given $M\in\N$ satisfying $M\leq\min\left(\frac{1}{24}|G|^{\varepsilon},\frac{1}{8}|G|^{-1}e^{\frac{\mathfrak{D}(G)}{12}}\right),$
and functions $f_{i}:V\to\R$ for $i=1,\ldots,M,$ there exists an
orthonormal basis $\mathcal{B}$ of $\ell^{2}(G)$ of real-valued
eigenfunctions of $\mathcal{G}$ such that for every $\varphi\in\mathcal{B}$
and $i=1,\ldots,M$, 
\begin{equation}
\left|\mu_{\varphi}[f_{i}]-\frac{\sum_{g\in G}f_{i}(g)}{|G|}\right|\leq64\frac{(\varepsilon+1)\log(|G|)}{\sqrt{\mathfrak{D}(G)}}\frac{\|f_{i}\|_{\ell^{2}}}{\sqrt{|G|}}.\label{eq:large_qr_bound}
\end{equation}
\end{cor}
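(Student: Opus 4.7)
The plan is to apply Theorem \ref{thm:main-thm-there-exists} to the ``family'' consisting of the single group $G$ with the choice
\[
t \eqdf \frac{64(\varepsilon+1)\log(|G|)}{\sqrt{\mathfrak{D}(G)}},
\]
and the given value of $M$. If the hypothesis \eqref{eq:prob-req} holds for these parameters, then \eqref{eq:ell2-bound} becomes exactly \eqref{eq:large_qr_bound}, so the only task is to verify \eqref{eq:prob-req}.

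To verify it, I would bound the two exponential factors in \eqref{eq:prob-req} separately, using only the uniform lower bound $\dim V \geq \mathfrak{D}(G) \geq \log^{2}(|G|)$ valid for every non-trivial irreducible $V$. By the choice of $t$, the exponent $t\sqrt{\dim V}/64$ is at least $(\varepsilon+1)\log(|G|)\sqrt{\dim V/\mathfrak{D}(G)} \geq (\varepsilon+1)\log(|G|)$, so $6 e^{-t\sqrt{\dim V}/64} \leq 6|G|^{-(\varepsilon+1)}$; and trivially $2 e^{-\dim V/12} \leq 2 e^{-\mathfrak{D}(G)/12}$. Pulling these uniform bounds out of the sum and using the Plancherel identity $\sum_{(\pi,V) \in \hat{G}} (\dim V)^{2} = |G|$, the left-hand side of \eqref{eq:prob-req} is bounded by
\[
12 M |G|^{-\varepsilon} + 4 M |G|\, e^{-\mathfrak{D}(G)/12},
\]
with strict inequality coming from the fact that the trivial representation was omitted, giving $\sum_{(\pi,V) \in \hat{G}-\mathrm{triv}} (\dim V)^{2} = |G| - 1 < |G|$.

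The two constraints $M \leq \tfrac{1}{24}|G|^{\varepsilon}$ and $M \leq \tfrac{1}{8}|G|^{-1} e^{\mathfrak{D}(G)/12}$ in the statement are precisely tuned so that the two summands above are bounded by $1/2$ each, so that their sum is at most $1$; combined with the strict Plancherel inequality this gives \eqref{eq:prob-req}, and Theorem \ref{thm:main-thm-there-exists} then yields the corollary. There is no genuine obstacle: the proof is just algebraic bookkeeping checking that the choices of $t$ and of the two upper bounds on $M$ are exactly calibrated to the two exponential terms appearing in \eqref{eq:prob-req}.
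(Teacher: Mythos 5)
Your proposal is correct and matches the paper's proof essentially line for line: same choice of $t$, same uniform exponential bounds, same use of $\sum_{V}(\dim V)^{2}<|G|$ over non-trivial $V$, and the same identification of the two $M$-constraints as calibrated to make each term at most $1/2$. Your observation that the strict inequality in \eqref{eq:prob-req} is supplied by $|G|-1<|G|$ is a careful point the paper glosses over, but it is not a deviation in approach.
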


\begin{rem}
The proof of Theorem \ref{thm:main-thm-there-exists} shows that if
e.g. $\D(G_{i})\geq|G_{i}|^{\alpha}$ with $\alpha>0$ as it is in
cases of interest (see below), if we only want to obtain 
\[
\left|\mu_{\varphi}[f_{i}^{j}]-\frac{\sum_{g\in G_{i}}f_{i}^{j}(g)}{|G_{i}|}\right|=o\left(\frac{\|f_{i}^{j}\|_{\ell^{2}}}{\sqrt{|G_{i}|}}\right)
\]
above then we can actually take $M_{i}\geq e^{c|G|^{\beta}}$ for
$c,\beta>0$ depending on $\alpha$, i.e. take the number of functions
$f_{i}^{j}$ to be super-polynomial in $|G_{i}|$. 

\end{rem}

\begin{example}
If $\I$ are the prime numbers, $G_{p}=\mathrm{PSL}_{2}(\mathbb{F}_{p}),$
and $\G_{p}$ are any Cayley graphs of $\mathrm{PSL}_{2}(\mathbb{F}_{p})$
with respect to symmetric generators, then a result of Frobenius gives
\[
\D(\mathrm{PSL}_{2}(\mathbb{F}_{p}))\geq\frac{p-1}{2},
\]
and $|G_{p}|\approx p^{3}$. So in this setting, Theorem \ref{thm:main-thm-there-exists}
gives that for any finite collection $A_{p}^{1},...,A_{p}^{m}\subset V_{p}$
with $|A_{p}^{j}|\gg p^{2+\epsilon}$ , there are real orthonormal
eigenbases of $\ell^{2}(\mathrm{PSL}_{2}(\mathbb{F}_{p}))$ such that
for any elements $\varphi_{p}$ of these bases, 
\begin{align*}
\mu_{\varphi_{p}}[A_{p}^{j}] & =\frac{|A_{p}^{j}|}{|\mathrm{PSL}_{2}(\mathbb{F}_{p})|}\left(1+O(p^{-\epsilon})\right)
\end{align*}
as $p\to\infty$.
\end{example}

When $\mathfrak{D}(G)$ is polynomial in $|G|$, we can also obtain
a quantum unique ergodicity result for partitions of the group into
sets whose sizes are on scales of the order $|G|^{1-\eta}$ for some
$\eta>0$ dependent upon on the size of $\mathfrak{D}(G)$. 
\begin{cor}
\label{cor:que-small-scale}Let $G$ be a finite group, $S\subseteq G$
be a symmetric subset and $\mathcal{G}=\mathrm{Cay}(G,S)$. Suppose
that there exists an absolute constant $s>0$ such that $\mathfrak{D}(G)\geq|G|^{s}$
and let $\eta=s-\varepsilon$ for any $0<\varepsilon<s$. Let $A_{i}\subseteq G$
be a collection of subsets partitioning $G$ with sizes satisfying
$c|G|^{1-\eta}\leq|A_{i}|\leq C|G|^{1-\eta}$ for some absolute constants
$c,C>0$. Then, for $|G|$ sufficiently large (dependent only upon
$c$ and $\eta$) there is an orthonormal eigenbasis $\mathcal{B}$
of the adjacency operator of $G$ such that for every $i$ and every
$\varphi\in\mathcal{B}$,

\[
\left|\mu_{\varphi}[A_{i}]-\frac{|A_{i}|}{|G|}\right|\leq\frac{K\log|G|}{|G|^{\frac{1}{2}\varepsilon}}\frac{|A_{i}|}{|G|},
\]
where $K>0$ is a constant dependent only upon $c$.
\end{cor}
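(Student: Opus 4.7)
This should follow from a direct application of Theorem \ref{thm:main-thm-there-exists} with $f^j = \mathbf{1}_{A_j}$; the only real design choice is a suitable value of $t$. Since $|G| = \sum_j |A_j|$ with $|A_j| = \Theta(|G|^{1-\eta})$, the number of partition sets satisfies $|G|^\eta/C \leq M \leq |G|^\eta/c$. The plan is to pick $t = C'\log|G|/|G|^{s/2}$ for a sufficiently large \emph{absolute} constant $C'$, verify that (\ref{eq:prob-req}) holds, and then rewrite (\ref{eq:mainthm-cmplex-eq}) in terms of $|A_i|/|G|$.

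\textbf{Verifying (\ref{eq:prob-req}).} Every non-trivial irrep satisfies $\dim V \geq \mathfrak{D}(G) \geq |G|^s$, so $\exp(-t\sqrt{\dim V}/64) \leq |G|^{-C'/64}$, while $\exp(-\dim V/12) \leq \exp(-|G|^s/12)$ decays super-polynomially in $|G|$. Combining with the Plancherel-type bound $\sum_{(\pi,V)\in\hat{G}-\triv}(\dim V)^2 \leq |G|$, the left-hand side of (\ref{eq:prob-req}) is at most
\[
\frac{2|G|^{1+\eta}}{c}\bigl(6|G|^{-C'/64} + 2 e^{-|G|^s/12}\bigr).
\]
The general bound $\mathfrak{D}(G) \leq \sqrt{|G|}$ forces $s \leq 1/2$ and hence $1+\eta < 3/2$, so any absolute $C' > 96$ (for instance $C' = 128$) will render this quantity less than $1$ provided $|G|$ is large enough in terms of $c$ and $\eta$.

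\textbf{Converting the bound.} With these choices, Theorem \ref{thm:main-thm-there-exists} produces the desired real orthonormal eigenbasis $\mathcal{B}$ and yields $\bigl|\mu_\varphi[A_i] - |A_i|/|G|\bigr| \leq t\sqrt{|A_i|/|G|}$. Writing $\sqrt{|A_i|/|G|} = (|A_i|/|G|)\sqrt{|G|/|A_i|}$ and using the lower bound $|A_i| \geq c|G|^{1-\eta}$ gives
\[
\left|\mu_\varphi[A_i] - \frac{|A_i|}{|G|}\right| \leq \frac{C'}{\sqrt{c}}\cdot \frac{\log|G|}{|G|^{(s-\eta)/2}}\cdot \frac{|A_i|}{|G|} = \frac{C'/\sqrt{c}\,\log|G|}{|G|^{\varepsilon/2}}\cdot \frac{|A_i|}{|G|},
\]
using $\eta = s-\varepsilon$; this is the claimed inequality with $K := C'/\sqrt{c}$, which depends only on $c$.

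\textbf{Main obstacle.} There is no genuine technical difficulty beyond correctly balancing the three scales $t$, $M$, and $\dim V$. The point is that the slack $\varepsilon$ in $\eta = s - \varepsilon$ does double duty: it is exactly what is needed on one hand to take $t$ large enough (on the scale $\log|G|/|G|^{s/2}$) for the representation-theoretic factor $\sum(\dim V)^2 \leq |G|$ and the combinatorial factor $M \approx |G|^\eta$ to fit together under the cap in (\ref{eq:prob-req}), and on the other to provide the decay $|G|^{-\varepsilon/2}$ in the final bound.
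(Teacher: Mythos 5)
Your proof is correct and takes essentially the same route as the paper's: you choose $t$ on the scale $\log|G|/|G|^{s/2}$ (the paper uses $128\log|G|/\sqrt{\mathfrak{D}(G)}\leq 128\log|G|/|G|^{s/2}$), verify \eqref{eq:prob-req} via $M\leq |G|^{\eta}/c$ and $\sum(\dim V)^2\leq |G|$, and then convert $\sqrt{|A_i|}$ into $|A_i|$ using the lower bound on $|A_i|$, arriving at the same constant $K=128/\sqrt{c}$.
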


So far we have dealt with groups that are at least $\log^{2}(|G|)$-quasirandom.
One key feature of the condition \eqref{eq:prob-req} is that it enables
us to go beyond $\mathfrak{D}(G)\geq\log^{2}(|G|)$. This pertains
to the important class of examples where $G_{n}$ is either the alternating
group $\mathrm{Alt}(n)$ or the symmetric group $\mathrm{Sym}(n)$.
\begin{prop}
\label{prop:que-perm-groups}Let $G_{n}=\mathrm{Alt}(n)$ or $\mathrm{Sym}(n)$,
 $S_{n}\subseteq G_{n}$ be symmetric subsets and $\mathcal{G}_{n}=\mathrm{Cay}(G_{n},S_{n})$.
Then given $M_{n}\in\N$ satisfying $M_{n}=o_{n\to\infty}(n)$ and
functions $f_{i}^{n}:V_{n}\to\R$ for $i=1,\ldots,M_{n}$, there exists
an orthonormal basis $\mathcal{B}_{n}$ of $\ell^{2}(G_{n})$ of real-valued
eigenfunctions of $\mathcal{G}_{n}$ such that for every $\varphi\in\mathcal{B}_{n}$,
$i=1,\ldots,M_{n}$, and $n$ sufficiently large
\begin{equation}
\left|\mu_{\varphi}[f_{i}]-\frac{\sum_{g\in G_{n}}f_{i}^{n}(g)}{|G_{n}|}\right|\leq192\frac{\log(n)}{\sqrt{n}}\frac{\|f_{i}^{n}\|_{\ell^{2}(G_{n})}}{\sqrt{|G_{n}|}}.\label{eq:permutation_bound}
\end{equation}
\end{prop}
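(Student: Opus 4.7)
The plan is to verify the hypothesis \eqref{eq:prob-req} of Theorem \ref{thm:main-thm-there-exists} for $G_n=\mathrm{Alt}(n)$ or $\mathrm{Sym}(n)$, with $t_n = 192\log(n)/\sqrt n$ and $M_n = o(n)$, and then apply the theorem. For $\mathrm{Sym}(n)$ the one-dimensional sign representation must be handled separately: the normalized sign character $\chi/\sqrt{|G_n|}$ is an eigenfunction of every Cayley graph (the adjacency operator multiplies $\chi$ by $\sum_{s\in S}\chi(s)$) and $|\chi|\equiv 1$ gives it the uniform quantum probability measure, so \eqref{eq:permutation_bound} holds for it with zero error. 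I place this eigenvector into $\mathcal{B}_n$ by hand, which reduces \eqref{eq:prob-req} to a sum over irreducibles with $\dim V \geq 2$. For $\mathrm{Alt}(n)$ with $n \geq 5$ no such modification is needed.

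For $n$ large, every non-one-dimensional irreducible of $G_n$ has dimension at least $n-1$ (the standard representation). Set $a_n := t_n/64 = 3\log(n)/\sqrt n$ and $g(x) := x^2 e^{-a_n \sqrt x}$; a direct calculus check shows $g$ is maximized at $x^\star = 16/a_n^2 = 16 n/(9\log^2 n) < n-1$, hence $g$ is monotonically decreasing on $[n-1,\infty)$ with $g(n-1)\leq C/n$ for an absolute constant $C$. To bound the first exponential sum, I split on whether $\dim V \leq n^3$: by a classical theorem of Rasala on minimal character degrees of $\mathrm{Sym}(n)$ (inherited by $\mathrm{Alt}(n)$ via restriction and splitting), for each fixed positive integer $K$ the number of irreducibles of $G_n$ of dimension $\leq n^K$ is bounded by a constant depending only on $K$. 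Thus the contribution from $\dim V \in [n-1,n^3]$ is $O(1) \cdot g(n-1) = O(1/n)$. For $\dim V > n^3$, monotonicity gives $g(\dim V) \leq g(n^3) = n^6 \cdot n^{-3\sqrt n}$; combined with the total partition bound $p(n) \leq e^{c\sqrt n}$ this contributes $e^{-\Omega(\sqrt n\log n)}$, negligible. The second-exponential sum $\sum (\dim V)^2 e^{-\dim V/12}$ is handled similarly: $x^2 e^{-x/12}$ is decreasing for $x \geq 24$ and $p(n)\cdot n^2 e^{-(n-1)/12}$ is exponentially small. Hence the full sum in \eqref{eq:prob-req} is $O(1/n)$, compatible with $M_n = o(n)$ for $n$ large, and the theorem produces the desired basis.

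The main obstacle is the sharp control of low-dimensional representations of $\mathrm{Sym}(n)$: the Plancherel identity $\sum (\dim V)^2 = |G_n|$ is far too lossy here, yielding $\sum (\dim V)^2 e^{-a_n\sqrt{\dim V}} \gtrsim n!/n^3$, which would force $M_n$ to be super-exponentially small. Rasala's rigidity of the smallest character degrees is precisely what allows replacing this Plancherel estimate by the much smaller $O(1)\cdot g(n-1)$ in the critical low-dimensional window $[n-1,n^3]$, and this is where the specific shape $t_n \asymp \log n/\sqrt n$ in \eqref{eq:permutation_bound} enters.
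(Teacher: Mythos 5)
Your proof is correct, but it reaches the key cardinality estimate by a genuinely different route from the paper. Both arguments begin identically: peel off the sign representation of $\mathrm{Sym}(n)$ by hand (its eigenfunction has $|\chi|\equiv 1$, so QUE holds exactly), reduce the union bound to irreducibles of dimension $\geq n-1$, and choose $t_n \asymp \log n/\sqrt n$ so that the first exponential term is $\lesssim (\dim V)^{-3}$ when $\dim V = n-1$. The divergence is in how one controls $\sum_{(\pi,V)} (\dim V)^2\,e^{-t_n\sqrt{\dim V}/64}$. The paper observes that with this choice of $t_n$ the summand is bounded by $6(\dim V)^{-1}$ and then invokes the Liebeck--Shalev/Lulov/M\"uller--Puchta/Gamburd evaluation of the Witten zeta function of $\mathrm{Sym}(n)$ at $s=1$: $\sum_{\widehat{\mathrm{Sym}(n)}}(\dim V)^{-1} = 2 + O(n^{-1})$, so removing the two $1$-dimensional reps leaves $O(n^{-1})$. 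You instead split by the scale of $\dim V$: in the window $[n-1,n^3]$ you bound the cardinality via Rasala's rigidity of low character degrees of $\mathrm{Sym}(n)$ (only $O_K(1)$ irreducibles with $\dim V \leq n^K$, inherited by $\mathrm{Alt}(n)$ by restriction), and above $n^3$ the monotonicity of $x^2 e^{-a_n\sqrt{x}}$ together with $p(n)\leq e^{c\sqrt n}$ makes the tail super-exponentially small; the $e^{-\dim V/12}$ sum is dominated the same way. What each route buys: the paper's argument is shorter and more uniform, but it requires the precise $2 + O(n^{-1})$ asymptotic of the Witten zeta value at $1$; yours substitutes a more elementary dichotomy (finitely many ``small'' reps, crude partition-count for the rest) and only needs the qualitative fact that low-degree irreducibles of $\mathrm{Sym}(n)$ are rigid, not a sharp zeta-value. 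Both are complete and both yield the same $O(M_n/n)$ probability bound, so $M_n = o(n)$ suffices. One small point: you take $t_n = 192\log n/\sqrt n$ (matching the stated bound exactly) whereas the paper sets $t_n = 192\log(n-1)/\sqrt{n-1}$; your choice is actually the cleaner one, and your check that $x^\star = 16/a_n^2 < n-1$ and that $g(n-1) = O(1/n)$ is sound for large $n$.
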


The proof of Theorem \ref{thm:main-thm-there-exists} revolves around
the fact that all eigenspaces of Cayley graphs arise from some irreducible
representation of the group and hence have multiplicities at least
the dimension of this corresponding representation. This leads to
a dichotomy: either the eigenspace is trivial (which we can deal with
directly) or has large dimension if the group is suitably quasirandom.
In the latter case, this allows one to choose a random basis for the
eigenspace using a random matrix of large dimension which is reflected
in the condition \eqref{eq:prob-req}. 

We describe in $\S$\ref{sec:Random-basis-construction} a random
model for real eigenbases of Cayley graphs that arise from products
of the classical compact groups with their Haar measures. This model
was used by Sah, Sawhney, and Zhao in \cite{Sa.Sa.Zh20} to show the
existence of eigenbases of Cayley graphs with close to optimal $\ell^{\infty}$
bounds. What we prove here is the following.
\begin{thm}
\label{thm:main-theorem-random}Let $G$ be a finite group, $S\subseteq G$
be a symmetric subset and $\G=\Cay(G,S)$. Let $M\in\N$ and let $f_{1},...,f_{M}\in\ell^{2}(G)$
be a collection of real-valued functions. Then, for any $t>0$, with
probability at least
\[
1-2M\sum_{(\pi,V)\in\hat{G}-\mathrm{triv}}(\dim V)^{2}\left(6e^{-\frac{t\sqrt{\dim V}}{64}}+2e^{-\frac{\dim V}{12}}\right),
\]
if $\B$ is a random real orthonormal eigenbasis of $\G$ as in $\S$\ref{sec:Random-basis-construction},
then for any $\varphi\in\B$ and any $i=1,...,M$, we have 
\begin{equation}
\left|\mu_{\varphi}[f_{i}]-\frac{\sum_{g\in G}f_{i}(g)}{|G|}\right|\leq t\frac{\|f_{i}\|_{\ell^{2}}}{\sqrt{|G|}}.\label{eq:random_thm-cmplex-eq}
\end{equation}
\end{thm}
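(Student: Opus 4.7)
The plan is to combine the Peter--Weyl decomposition of $\ell^{2}(G)$ with Hanson--Wright-type concentration for quadratic forms of Haar-random unit vectors, and then union-bound over basis vectors, non-trivial representations, and the $M$ test functions. The $M(\dim V)^{2}$ prefactor in the failure probability already hints at the union bound over the $(\dim V)^{2}$ basis vectors of $\B$ inside each $\pi$-isotypic component; the two tails $6e^{-t\sqrt{\dim V}/64}$ and $2e^{-\dim V/12}$ will come respectively from the concentration inequality itself and from a Gaussianization step that replaces a uniform vector on a sphere by a normalised Gaussian.

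Concretely, I would decompose $\ell^{2}(G)=\bigoplus_{(\pi,V)\in\hat{G}}H_{\pi}$ with $H_{\pi}\cong V\otimes V^{\ast}$ of dimension $(\dim V)^{2}$. The adjacency operator $A$ of $\G$ is right-convolution by $\mathbf{1}_{S}$, so it commutes with the left regular representation, preserves each $H_{\pi}$, and acts there as $A_{\pi}\otimes\mathrm{id}_{V^{\ast}}$ for some $A_{\pi}\in\End V$. Its eigenspaces inside $H_{\pi}$ are therefore of the form $E=W\otimes V^{\ast}$ for eigenspaces $W\subseteq V$ of $A_{\pi}$. The trivial isotypic component produces only the constant eigenfunction, for which $\mu_{\varphi}[f_{i}]=\sum_{g}f_{i}(g)/|G|$ deterministically, so the remaining task is to control fluctuations in each non-trivial $H_{\pi}$. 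By the construction of $\S$\ref{sec:Random-basis-construction}, a basis vector of $E$ has the form $\varphi=w\otimes\eta$ with $\eta$ deterministic in $V^{\ast}$ and $w$ Haar-uniform on the sphere of $W$ (in $\R^{\dim W}$, $\C^{\dim W}$, or $\H^{\dim W/2}$ according to the Frobenius--Schur indicator of $\pi$). Unpacking the Peter--Weyl identification $H_{\pi}\hookrightarrow\ell^{2}(G)$, the value $\varphi(g)$ depends linearly on $w$, so
\[
\mu_{\varphi}[f_{i}]=w^{\ast}\tilde{B}w,\qquad \tilde{B}=\sum_{g}f_{i}(g)\,P_{W}\,\xi_{g}\xi_{g}^{\ast}\,P_{W}\in\End W,
\]
for some vectors $\xi_{g}\in V$ carrying the matrix coefficient data. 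Since $P_{E}$ commutes with the left regular representation, $(P_{E})_{gg}=\dim E/|G|$ is constant in $g$, giving $\E[w^{\ast}\tilde{B}w]=\tr(\tilde{B})/\dim W=\sum_{g}f_{i}(g)/|G|$ as required, and a cyclic-trace computation yields $\|\tilde{B}\|_{\mathrm{HS}}^{2}\leq(\dim W/|G|)\|f_{i}\|_{\ell^{2}}^{2}$ together with $\|\tilde{B}\|_{\mathrm{op}}\leq\|f_{i}\|_{\infty}\leq\|f_{i}\|_{\ell^{2}}$.

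To finish, I would Gaussianise by writing $w=g/\|g\|$ for a standard Gaussian vector $g$; the norm-concentration event $\bigl|\|g\|^{2}/\dim W-1\bigr|\leq 1/2$ fails with probability at most $2e^{-\dim V/12}$, and on its complement the Hanson--Wright inequality applied to $g^{\ast}\tilde{B}g$ and balanced between its two Bernstein regimes, using the above $\|\cdot\|_{\mathrm{HS}}$ and $\|\cdot\|_{\mathrm{op}}$ bounds, produces the sub-exponential tail $6e^{-t\sqrt{\dim V}/64}$ at deviation $t\|f_{i}\|_{\ell^{2}}/\sqrt{|G|}$. A union bound over the $(\dim V)^{2}$ basis vectors of $\B\cap H_{\pi}$, over all non-trivial $\pi$, and over $i=1,\ldots,M$ then yields the stated probability. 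The main obstacle I anticipate is extracting $\sqrt{\dim V}$ rather than only $\sqrt{\dim W}$ (which could be as small as one) inside the Hanson--Wright exponent: this should follow by working instead with the quadratic form on the full eigenspace $E=W\otimes V^{\ast}$, whose dimension is at least $\dim V$ and which inherits improved Hilbert--Schmidt norm bounds, but making this uniform across the real, complex, and quaternionic cases of the Frobenius--Schur indicator is the main subtlety.
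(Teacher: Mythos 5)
Your overall strategy—Peter--Weyl decomposition, representing the diagonal of $\mu_\varphi$ as a quadratic form in a random vector, Gaussianising, and concluding with a Hanson--Wright-type bound followed by a union bound over basis vectors, non-trivial $\pi$, and $k=1,\ldots,M$—is indeed the approach the paper takes, and the way you read the three factors ($M$, $(\dim V)^2$, and the two exponential tails) off the failure probability is correct. The paper's Lemma \ref{lem:large-deviations} is exactly a diagonalised Hanson--Wright for sums of zero-trace combinations of $\chi^2$ variables, and Lemma \ref{lem:large norm} is your norm-concentration event. However, there are two genuine gaps.

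First, you have placed the randomness on the wrong tensor factor. In $\S$\ref{sec:Random-basis-construction} the eigenspace inside the $\pi$-isotypic component is $\check V\otimes W_\lambda$ (with $\rho(A)$ acting as $\mathrm{id}\otimes\pi(A)$ on $\check V\otimes V$), and the randomness is the Haar-random orthonormal basis $\{w_j\}=\{u e_j\}$ of the \emph{whole} $\dim V$-dimensional space $V$; the deterministic vectors are the $\pi(A)$-eigenvectors $v_k\in W_\lambda$. You instead take $\varphi=w\otimes\eta$ with $\eta$ deterministic and $w$ Haar on the sphere of the eigenspace $W_\lambda$, which can be one-dimensional, in which case there is no concentration at all. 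You notice a related discomfort (``extracting $\sqrt{\dim V}$ rather than $\sqrt{\dim W}$''), but the fix you propose—randomising over all of $E=W_\lambda\otimes\check V$—does not match the model of $\S$\ref{sec:Random-basis-construction}, whose random bases are far from Haar on $E$: they are of the constrained form $\check{(ue_j)}\otimes v_k$. The correct reading is that the random vector already lives in $V$, so the $\sqrt{\dim V}$ falls out directly once the Hilbert--Schmidt bound from Lemma \ref{lem:propertiesofx} is in place; in particular the paper never needs an operator-norm bound of the kind $\|\tilde B\|_{\mathrm{op}}\le\|f\|_\infty$, whose crudeness ($\|f\|_\infty$ can equal $\|f\|_{\ell^2}$) would otherwise wreck your Hanson--Wright exponent.

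Second, for the \emph{real}-valued eigenbases of the theorem the expression $\mu_\varphi[f]=w^\ast\tilde B w$ is not available. The type-1 and type-2 basis vectors are $\sqrt{2\dim V/|G|}\,\mathrm{Re}\,\langle\pi(g)v,w\rangle$ and its imaginary-part counterpart, so $|\varphi(g)|^2$ is not a Hermitian sesquilinear form in $w$: expanding $(\mathrm{Re}\,\langle\pi(g)v,w\rangle)^2$ produces, besides a $\check\pi\otimes\pi$ term (quadratic in $(\bar w,w)$), a genuinely bilinear $\pi\otimes\pi$ term (quadratic in $(w,w)$). The paper isolates both pieces in equation \eqref{eq:splitupphi} and Proposition \ref{prop:errorboundreal}, and bounds them with two separate random-matrix lemmas (Parts 1 and 2 of Lemma \ref{lem:complex-random-matrices}), the second requiring a symmetrisation $M=S+R$ with the antisymmetric part dropping out. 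Your $w^\ast\tilde B w$ template only captures the first piece, which is why it would suffice for the complex eigenbasis of Remark \ref{rem:complex-eiengbases} but not for the real eigenbasis asserted in the theorem. To salvage your outline you would need to work with the real quadratic form in $(\mathrm{Re}\,w,\mathrm{Im}\,w)$, reproducing exactly the $\pi\otimes\pi$ versus $\check\pi\otimes\pi$ split of the paper.
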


As indicated by Corollaries \ref{cor:que-large-qr} and \ref{cor:que-small-scale},
it is good to know that there are an abundance of $|G|^{\delta}$-quasirandom
groups for $0<\delta<1$. Indeed, for \textit{finite simple groups
of Lie type} with rank $r$ over finite fields, it is shown in the
proof of \cite[Prop. 3.2]{Br.Gr.Gu.Ta15} (see also Remark 1.3.6 of
\cite{Ta15}) using earlier work of \cite{La.Se74,Se.Za93} that such
groups are $|G|^{\delta}$-quasirandom with $\delta$ depending only
on the rank $r$. We refer to \cite[\S 5.2]{Br.Gr.Gu.Ta15} for the
precise definition of these groups. As such, the values of $t$ in
Theorems \ref{thm:main-thm-there-exists} and \ref{thm:main-theorem-random}
can be taken to have decay that is \emph{polynomial} in $|G|$ for
this wide class of groups (see Corollary \ref{cor:que-large-qr}).

Let us now discuss the strength of the upper bound obtained in Theorems
\ref{thm:main-thm-there-exists} and \ref{thm:main-theorem-random}.
Since the sum of squares of the dimensions of the irreducible representations
of a group equal the size of the group, 
\[
\D(G)\leq|G|^{\frac{1}{2}}
\]
which means that the best possible value we could possibly obtain
for the right hand side of \eqref{eq:mainthm-cmplex-eq} or \eqref{eq:random_thm-cmplex-eq}
is 
\[
\frac{C\log(|G|)}{|G|^{\frac{3}{4}}}\|f\|_{\ell^{2}}.
\]
This is still a factor of $|G|^{\frac{1}{4}}$ off from what is known
about random regular graphs: recently Bauerschmidt, Huang, and Yau
\cite{Ba.Hu.Ya19} obtained a very strong version of QUE for random
regular graphs with respect to the uniform model of fixed degree and
number of vertices\footnote{See also \cite{Ba.Kn.Ya17} for the case of growing degree.}. 
\begin{thm}[{Bauerschmidt-Huang-Yau \cite[Cor. 13]{Ba.Hu.Ya19}}]
 \label{thm:BHYque}Let $d\gg1$ and let $\G_{n}$ be a uniformly
random $d$-regular graph on $n$ vertices. Suppose $f_{n}:V_{n}\to\R$,
then with probability tending to one as $n\to\infty$, for any eigenfunction
$\varphi\in\ell^{2}(V_{n})$ of the adjacency operators of $\G_{n}$
with eigenvalues $\lambda_{n}$ satisfying $|\lambda_{n}\pm2\sqrt{d-1}|>(\log n)^{-\frac{3}{2}}$,
\begin{align*}
\left|\sum_{v\in V_{n}}f_{n}(v)|\varphi(v)|^{2}-\frac{\sum_{v\in V_{n}}f_{n}(v)}{n}\right|\leq\frac{(\log n)^{250}}{n}\sqrt{\sum_{v\in V_{n}}|f_{n}(v)|^{2}}.
\end{align*}
\end{thm}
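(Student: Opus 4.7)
The plan is to use the resolvent / Green's function method that underpins modern random matrix theory. Let $A$ be the adjacency matrix of $\G_n$ and consider $G(z)=(A-zI)^{-1}$ for $z=E+i\eta$ in the upper half-plane. The first and most substantial step is to establish a strong local law: for $\eta$ down to the scale $n^{-1}(\log n)^{C}$ and $E$ in the bulk satisfying $|E\pm 2\sqrt{d-1}|>(\log n)^{-3/2}$, the diagonal entries $G_{xx}(z)$ are close to $m_d(z)$, the Stieltjes transform of the Kesten--McKay measure, and the off-diagonal entries $G_{xy}(z)$ are of size $\sqrt{\Im m_d(z)/(n\eta)}$, with very high probability over the uniform $d$-regular model.

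The main obstacle is precisely this local law: the entries of $A$ are far from independent because every row sum equals $d$, so Wigner-matrix technology does not apply directly. I would combine Schur-complement / loop-equation expansions of $G$ with the \emph{switching method} for random regular graphs, replacing pairs of edges by resampled pairs to generate enough conditional independence to drive a self-consistent equation for $\E G_{xx}$ and to bootstrap fluctuation bounds from large $\eta$ down to the optimal mesoscopic scale.

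Once the local law is in hand, individual eigenvectors are analyzed through the spectral identity $|\varphi_k(v)|^{2}=\lim_{\eta\to 0^{+}}\eta\,\Im G_{vv}(\lambda_k+i\eta)$. To evaluate $\mu_{\varphi_k}[f_n]-\frac{1}{n}\sum_v f_n(v)$, I would use a Helffer--Sjöstrand-type contour representation to express this difference as an integral of
\[
\sum_{v\in V_n} f_n(v)\bigl(G_{vv}(z)-m_d(z)\bigr)
\]
against a smooth cutoff concentrated near $\lambda_k$ at the optimal scale $\eta\sim(\log n)^C/n$. The local law bounds the integrand, and Cauchy--Schwarz in $v$ combined with the Ward identity $\sum_y |G_{xy}(z)|^{2}=\Im G_{xx}(z)/\eta$ converts this into a bound of order $(\log n)^{C'}/n\cdot\|f_n\|_{\ell^2}$, which is the shape demanded by the statement.

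Finally, to promote this to a statement that is uniform over \emph{every} eigenfunction $\varphi$ simultaneously, I would invoke the eigenvector moment flow of Bourgade--Yau: one embeds the graph adjacency matrix into a Dyson Brownian motion interpolating to a Gaussian reference model, and shows the observables $\sum_v f_n(v)|\varphi_k(v)|^{2}$ solve a parabolic system whose relaxation time is shorter than the time needed to wash out the graph structure, so that Gaussian eigenvector statistics are transferred back to $\G_n$. A union bound over eigenvectors and over a net of energies costs additional polylog factors, which is absorbed into the $(\log n)^{250}$ in the statement.
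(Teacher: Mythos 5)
This statement is not one the paper proves: it is Corollary~13 of Bauerschmidt--Huang--Yau \cite{Ba.Hu.Ya19}, stated here with attribution purely to benchmark the strength of the paper's own QUE bounds for Cayley graphs against what is known for uniformly random regular graphs. There is consequently no ``paper's own proof'' to compare your sketch against; the paper uses entirely different machinery (representation theory of $G$, Peter--Weyl decomposition, random orthonormal bases inside isotypic components, and tail bounds for Haar unitary/orthogonal matrices), none of which bears on the proof of the BHY result.

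Regarding your sketch as a reconstruction of the cited work's argument: the first three paragraphs are essentially right in spirit. BHY do prove a Green's function local law for the adjacency matrix of a uniformly random $d$-regular graph down to the scale $\eta \sim (\log n)^{C}/n$, and the central difficulty is exactly the row-sum constraint destroying independence, which they handle via the switching method to manufacture enough conditional independence for a self-consistent equation, then bootstrap in $\eta$. The passage from the local law to eigenvector observables via $\eta\,\Im G_{vv}$, spectral windows, and Ward-identity/Cauchy--Schwarz estimates is also the right shape. However, your final paragraph overshoots: BHY's Corollary~13 does not invoke Dyson Brownian motion or the Bourgade--Yau eigenvector moment flow. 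Once the local law holds at the optimal scale with stretched-exponential failure probability, the individual-eigenvector QUE bound follows from the local law and eigenvalue spacing estimates by a union bound over eigenvalues, with no interpolation to a Gaussian ensemble. In that paper, DBM-type arguments are reserved for a separate universality statement about bulk eigenvalue statistics, not for this $\ell^{2}$ QUE estimate.
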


The first result about equidistribution of quantum probability measures
of eigenfunctions\footnote{Strictly speaking, Theorem \ref{thm:AM} is a result about Quantum
\emph{Ergodicity} rather than QUE.} on graphs was obtained by Anantharaman and Le Masson in \cite[Thm. 1]{An.Le15}. 

\begin{thm}[{\cite[Thm. 1]{An.Le15}}]
\label{thm:AM}Let $\G_{i}$ be $d$-regular, $d>3$, and $N\eqdf|V(\G_{i})|\to\infty$
as $i\to\infty$. Suppose that the sequence $\G_{i}$ form a family
of uniform expanders and converge to the infinite $d$-regular tree
in the sense of Benjamini and Schramm \cite{Be.Sc01}. Let $\{\varphi_{j}^{(i)}\}_{j=1}^{N}$
be an orthonormal basis of eigenfunctions of the adjacency operator
of $\G_{i}$. Let $f_{i}:V_{i}\to\C$ be a sequence of functions with
$\|f_{i}\|_{\infty}\leq1$, then for any $\delta>0$
\begin{align}
\frac{1}{N}\left|\left\{ j\in[1,N]:\left|\sum_{v\in V_{i}}f_{i}(v)|\varphi_{j}^{(i)}(v)|^{2}-\frac{1}{N}\sum_{v\in V_{i}}f_{i}(v)\right|>\delta\right\} \right|\to0\label{eq:ALMqeresult}
\end{align}
as $i\to\infty$.
\end{thm}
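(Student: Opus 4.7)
I would adapt the Shnirelman--Zelditch quantum ergodicity framework to the graph setting, with Benjamini--Schramm convergence to the $d$-regular tree $\mathcal{T}_d$ playing the role of the semiclassical limit and uniform expansion playing the role of ergodicity of the classical flow. Fix $i$; write $N=|V_i|$, let $A_i$ be the adjacency operator, and set $\bar f_i:=\tfrac{1}{N}\sum_v f_i(v)$. As a ``quantization'' of $f_i$ take the multiplication operator $M_{f_i}$ on $\ell^{2}(V_i)$, so $\sum_v f_i(v)|\varphi_j^{(i)}(v)|^{2}=\langle\varphi_j^{(i)},M_{f_i}\varphi_j^{(i)}\rangle$. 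By Chebyshev's inequality, \eqref{eq:ALMqeresult} reduces to the quantum variance bound
\[
V_i(f_i):=\frac{1}{N}\sum_{j=1}^N\bigl|\langle\varphi_j^{(i)},M_{f_i}\varphi_j^{(i)}\rangle-\bar f_i\bigr|^{2}\xrightarrow{i\to\infty}0.
\]

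\textbf{Key steps.} For each $k\geq 0$, let $B_k=P_k(A_i)$ be the ``non-backtracking walk'' polynomial in $A_i$, so that $(B_k)_{xy}$ counts non-backtracking walks from $x$ to $y$ of length $k$; in particular each $B_k$ is diagonal in the eigenbasis $\{\varphi_j^{(i)}\}$. Replace $M_{f_i}$ by a suitably normalized time-average $K_T:=\tfrac{1}{\mathcal{N}_T}\sum_{k<T}B_k^{*} M_{f_i} B_k$, whose diagonal entries in the eigenbasis agree with those of $M_{f_i}$ up to an eigenvalue-dependent correction that is small for $\lambda_j$ in a bulk region of measure $1-o(1)$ (the small exceptional set being absorbable into the bad indices in \eqref{eq:ALMqeresult}). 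Then estimate
\[
V_i(f_i)\leq \frac{1}{N}\bigl\|K_T-\bar f_i\cdot\mathrm{Id}\bigr\|_{\mathrm{HS}}^{2}+o(1)
\]
and expand the Hilbert--Schmidt norm as a double sum over $V_i\times V_i$ of $f_i(x)\overline{f_i(y)}$ weighted by counts of pairs of non-backtracking walks joining $x$ to $y$. Benjamini--Schramm convergence replaces these local counts by their $\mathcal{T}_d$ analogues with $o(1)$ error; uniform expansion guarantees that the non-trivial spectrum of $A_i$ is bounded away from the extremal eigenvalues, so that only the constant eigenfunction contributes the $\bar f_i$ subtraction. On $\mathcal{T}_d$, the resulting quadratic form in $f_i$ is controlled by the $L^{2}\to L^{2}$ norm of an averaged non-backtracking walk operator, which decays like $C\|f_i\|_\infty^{2}/T$ by standard spherical-function computations (equivalently, the ergodic theorem on the tree using the Kesten--McKay spectral measure).

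\textbf{Main obstacle.} The hard part is the joint calibration of the time horizon $T=T_i$ with the Benjamini--Schramm rate: $T_i$ must tend to $\infty$ fast enough that the tree ergodic bound $\|f_i\|_\infty^{2}/T_i$ drives $V_i(f_i)$ to zero, but slowly enough that length-$T_i$ non-backtracking walks in $\mathcal{G}_i$ almost surely remain in a tree-like neighbourhood so that the Benjamini--Schramm replacement costs only $o(1)$. Intertwined with this, one needs the uniform spectral gap to ensure that the bulk region of $A_i$-eigenvalues really has density $1-o(1)$, and that the eigenvalue-dependent correction from the time-averaging step is large only on a vanishing fraction of indices $j$, which can then be dumped into the exceptional set in \eqref{eq:ALMqeresult}.
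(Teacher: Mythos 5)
Theorem \ref{thm:AM} is not proved in this paper: it is quoted verbatim from \cite[Thm.~1]{An.Le15} purely for context, to contrast the quantum \emph{unique} ergodicity statements of the present work with the earlier quantum ergodicity result of Anantharaman--Le Masson. There is therefore no internal proof in the paper to compare your proposal against.

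Taken on its own terms, your sketch does reproduce the skeleton of the Anantharaman--Le Masson argument: Chebyshev reduction to a quantum variance, non-backtracking time-averaging, a Hilbert--Schmidt bound expanded over pairs of walks, Benjamini--Schramm replacement of local walk counts by their tree analogues, and the uniform spectral gap from expansion; and you correctly identify the genuine difficulty, namely the joint calibration of the horizon $T_i$ against the Benjamini--Schramm convergence rate. The place you are gliding past real work is the ``eigenvalue-dependent correction'' produced by time-averaging. Since each $B_k=P_k(A_i)$ is literally a polynomial in $A_i$, the operator $K_T=\tfrac{1}{\mathcal{N}_T}\sum_{k<T}B_k^{*}M_{f_i}B_k$ is \emph{not} generally diagonal in the eigenbasis, and its diagonal entries acquire the factor $\tfrac{1}{\mathcal{N}_T}\sum_{k<T}P_k(\lambda_j)^2$, which cannot be normalized to $1+o(1)$ simultaneously for all $j$ because the $P_k$ grow at very different rates inside and outside the tempered window $[-2\sqrt{d-1},\,2\sqrt{d-1}]$. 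Anantharaman and Le Masson resolve this by building a bona fide operator calculus on the space of \emph{directed edges} and proving a discrete Egorov theorem for the non-backtracking shift there, rather than working with raw polynomials in $A_i$ on $\ell^{2}(V_i)$; substantiating your claim that the normalization error is ``absorbable into the bad indices'' is exactly the step where a naively transplanted Shnirelman averaging argument would stall.
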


For related results of quantum ergodicity on quantum graphs, see for
example \cite{Br.Wi2016,An.In.Sa.Wi21}. See also the recent work
of Naor, Sah, Sawhney and Zhao \cite{Na.Sa.Sa.Zh22} in the Cayley
graph setting, where they prove an incomparable quantum ergodicity
result, rather than quantum unique ergodicity. 

\subsection{QUE on manifolds}

Because the type of results of the current paper draw their inspiration
from analogous questions about manifolds, we include a brief discussion
of the state of the art results in that setting. 

Let $M$ be a closed and connected Riemannian manifold and let $\{\varphi_{j}\}_{j\geq1}$
be an orthonormal basis of $L^{2}(M)$ consisting of Laplacian eigenfunctions
with corresponding eigenvalues $0=\lambda_{1}<\lambda_{2}\leq\ldots\to\infty$.
A central question is the \emph{quantum unique ergodicity conjecture}
of Rudnick and Sarnak \cite{Ru.Sa94}. This says that if $M$ is negatively
curved, then the quantum probability measures of the eigenfunctions
weak-$*$ converge as $i\to\infty$ to the normalized Riemannian volume
form. A more general statement of this conjecture involving microlocal
lifts can be found in the survey article of Sarnak \cite{Sa11}. For
manifolds without negative curvature, there are counterexamples to
this conjecture as illustrated for example by Hassel \cite{Ha10}
for certain ergodic billiards, building upon earlier numerical work
by O'Connor and Heller \cite{O.He88}. 

Despite counterexamples demonstrating that ergodicity alone is insufficient
for quantum unique ergodicity, there is numerical evidence to support
the conjecture in the presence of negative curvature \cite{Au.St93,He.Ra92}.
In addition, there are striking results of Anantharaman and Nonnenmacher
\cite{An.No07,An08} and Dyatlov and Jin \cite{Dy.Ji18} regarding
the entropy and support of possible limits of quantum probability
measures. Moreover, Lindenstrauss \cite{Li06} (with an extension
by Soundararajan \cite{So10} for the non-compact case), proved that
the quantum unique ergodicity conjecture holds for Hecke-Laplace eigenfunctions
on arithmetic surfaces.

For closed Riemannian manifolds in general, ergodicity of the geodesic
flow alone is sufficient to prove a weaker result known as quantum
ergodicity. This result exhibits the existence of a density one subsequence
of the quantum probability measures that weak-$*$ converges to the
normalized volume measure \cite{Sn74,Ze87,Co85}. Theorem \ref{thm:AM}
above can be seen as a natural graph analogue of this weaker property.
In the manifold setting, quantum ergodicity has also been investigated
for random bases. For example, in \cite{Ze1992} it is shown that
random (Haar unitary) eigenbases of the Laplacian for $L^{2}(S^{2})$
are quantum ergodic with probability one, despite the standard basis
of spherical harmonics failing to have this property. This is upgraded
to quantum unique ergodicity in \cite{Va1997}. Similarly, quantum
ergodicity and quantum mixing properties have been studied for random
bases (not necessarily eigenbases) for general compact Riemannian
manifolds \cite{Ze1996,Ze2014} as well as quantum unique ergodicity
\cite{Ma2013}.

\subsection{Outline of the paper}

The remainder of the paper proceeds as follows. In $\S$\ref{sec:Background}
we give an overview of the relevant representation theoretic background
and outline the construction of Cayley graphs and how the adjacency
operator acts through representation theory. In $\S$\ref{sec:Random-basis-construction}
we describe the random bases we use throughout the paper. In $\S$\ref{sec:Deterministic-error-term}
we give a deterministic bound on the quantities 
\[
\left|\mu_{\varphi}[f]-\frac{\sum_{g\in G}f(g)}{|G|}\right|
\]
featuring in the main results. In $\S$\ref{sec:Probabilistic-ingredients}
we give first some basic large deviations estimates for sums of independent
random variables, and then apply these to obtain concentration results
for tensor products of random matrices from the classical compact
groups. Finally, in $\S$\ref{sec:proof-main-thm} we prove Theorem
\ref{thm:main-theorem-random} by combining the deterministic error
estimate and our random matrix results.

\section{Background\label{sec:Background}}

\subsection{Representation theory of finite groups}

We begin by outlining basic concepts in representation theory. A more
complete background can be found in \cite{Fu.Ha91}.

Let $G$ denote a finite group. We consider unitary representations
of $G$. These are pairs $(\pi,V)$ where $V$ is a finite-dimensional
complex Hilbert space and $\pi:G\to GL(V)$ is a homomorphism such
that $\pi(g)$ is unitary for each $g\in G$. When clear, we will
just refer to $\pi$ or $V$ as a representation. We will denote the
trivial representation of $G$ by $(\mathrm{triv},\C)$, where $\C$
has the standard inner product and $\mathrm{triv}(g)$ is the identity
for all $g\in G$.

The group algebra $\C[G]$ is the ring of formal complex linear combinations
of elements of $G$. We identify $\C[G]$ with $\ell^{2}(G)$ throughout
the paper. Any representation $(\pi,V$) of $G$ linearly extends
to $\pi:\C[G]\to\End(V)$ making $V$ a $\C[G]$ module.

Recall that a representation $(\pi,V)$ is irreducible if there are
no proper subspaces of $V$ that are invariant under $\pi(g)$ for
all $g\in G$. Two representations $(\pi_{1},V_{1})$ and $(\pi_{2},V_{2})$
are equivalent if there is a unitary isomorphism $T:V_{1}\to V_{2}$
that intertwines the representations: $T\circ\pi_{1}(g)=\pi_{2}(g)\circ T$
for all $g\in G$. We will denote the unitary dual of $G$ by $\hat{G}$,
it is the collection of equivalence classes of irreducible representations
of $G$. We will not make any distinction between an equivalence class
in $\hat{G}$ and an element of the equivalence class; hence we will
freely write $(\pi,V),\ \pi,\ V\in\hat{G}$.

Given a representation $(\pi,V)$ of $G$, the dual representation
will be denoted by $(\check{\pi},\check{V})$. Here, $\check{V}$
is the dual space of $V$ equipped with the inner product arising
from that of $V$ on the corresponding Riesz representation vectors,
and $\check{\pi}$ is defined by $[\check{\pi}(g)\alpha](v)=\alpha(\pi(g^{-1})v)$
for all $g\in G$ and $v\in V$. If $(\pi,V)$ is irreducible, then
so is $(\check{\pi},\check{V})$.

Given $(\pi,V)\in\hat{G}$, and $v_{1},v_{2}\in V$, the matrix coefficient
\[
\Phi_{v_{1},v_{2}}^{V}\eqdf\langle\pi(g)v_{2},v_{1}\rangle
\]
is in $\ell^{2}(G)$. This extends bilinearly to a map $\Phi^{V}:\check{V}\otimes V\to\ell^{2}(G)$.
The inner product on $\ell^{2}(G)$ is given by
\[
\langle f_{1},f_{2}\rangle\eqdf\sum_{g\in G}f_{1}(g)\overline{f_{2}(g)}.
\]
The space $\ell^{2}(G)$ is a bimodule for $G\times G$ (under left
and right multiplication) and the induced map

\begin{equation}
\Phi\eqdf\bigoplus_{(\pi,V)\in\hat{G}}\frac{\sqrt{\dim V}}{\sqrt{G}}\Phi^{V}:\bigoplus_{(\pi,V)\in\hat{G}}\check{V}\otimes V\to\ell^{2}(G)\label{eq:pter-weyl}
\end{equation}
 is a unitary bimodule isomorphism by the Peter-Weyl theorem. We also
have the Plancherel formula
\begin{equation}
\|f\|_{2}^{2}=\frac{1}{|G|}\sum_{(\pi,V)\in\hat{G}}\dim V\|\pi(f)\|_{\HS}^{2},\label{eq:plancherel}
\end{equation}
where $\|\pi(f)\|_{\HS}^{2}\eqdf\tr_{V}(\pi(f)\pi(f)^{*})$.

\subsection{Cayley graphs}

\label{sec:cayleygraphs} Let $G$ be a finite group and let $S=\{s_{1},s_{1}^{-1},...,s_{d},s_{d}^{-1}\}$
be a symmetric subset in $G$ such that $|S|=2d$. The Cayley graph
$\Cay_{0}(G,S)$ is the directed graph with an edge between $g$ and
$h$ if $gs=h$ for some $s\in S$. The directed edges of $\Cay_{0}(G,S)$
have a pairing arising from matching edges arising from $gs=h$ with
the edge arising from $g=hs^{-1}$; the quotient by this equivalence
relation is the undirected Cayley graph $\Cay(G,S)$, which is a $2d$-regular
graph. The adjacency operator on $\ell^{2}(G)$ can be written as
\begin{align*}
\mathcal{A}[f](g)=\sum_{i=1}^{d}\left(f(gs_{i})+f(gs_{i}^{-1})\right)=\rho(A)[f](g),
\end{align*}
 where $\rho$ is the right regular representation and 
\begin{align*}
A\eqdf\sum_{i=1}^{d}\left(s_{i}+s_{i}^{-1}\right)\in\C[G].
\end{align*}

\section{Random basis construction\label{sec:Random-basis-construction}}

In this section we will outline the construction of the bases of eigenfunctions
for the adjacency operator. The idea is to exploit the decomposition
of $\ell^{2}(G)$ as the direct sum $\bigoplus_{(\pi,V)\in\hat{G}}\check{V}\otimes V$.
To obtain a basis of real-valued functions, one must select the basis
inside each irreducible representation dependent upon whether the
representation is non-self dual, real or quaternionic as we explain
below.

\subsection{Non self-dual representations}

We start with the case that $(\pi,V)$ is an irreducible representation
that is not equivalent to its dual representation $(\check{\pi},\check{V})$.
Due to their non-equivalence, both $\check{V}\otimes V$ and $V\otimes\check{V}$
appear as distinct summands in the decomposition of $\ell^{2}(G)$
as the direct sum $\bigoplus_{(\theta,W)\in\hat{G}}\check{W}\otimes W$.
We will thus seek an orthonormal basis of $(\check{V}\otimes V)\oplus(V\otimes\check{V})$.
As before, let $\{v_{k}^{V}\}$ be an orthonormal basis of $V$ consisting
of eigenvectors of $\pi(A)$. Moreover, let $\{w_{j}^{V}\}$ be any
orthonormal basis of $V$. Then the collection 
\begin{align*}
\left\{ \frac{1}{\sqrt{2}}(\check{w}_{j}^{V}\otimes v_{k}^{V}+w_{j}^{V}\otimes\check{v}_{k}^{V}),\frac{1}{i\sqrt{2}}(\check{w}_{j}^{V}\otimes v_{k}^{V}-w_{j}^{V}\otimes\check{v}_{k}^{V}):j,k=1,\ldots,\dim V\right\} 
\end{align*}
forms an orthonormal basis of $(\check{V}\otimes V)\oplus(V\otimes\check{V})$.
Moreover, they correspond to functions in $\ell^{2}(G)$
\begin{align*}
x_{k,j}^{V}(g) & \eqdf\frac{\sqrt{\dim V}}{\sqrt{2}\sqrt{|G|}}(\langle\pi(g)v_{k}^{V},w_{j}^{V}\rangle+\langle\check{\pi}(g)\check{v}_{k}^{V},\check{w}_{j}^{V}\rangle)=\frac{\sqrt{2\dim V}}{\sqrt{|G|}}\mathrm{Re}(\langle\pi(g)v_{k}^{V},w_{j}^{V}\rangle),\\
y_{k,j}^{V}(g) & \eqdf\frac{\sqrt{\dim V}}{i\sqrt{2}\sqrt{|G|}}(\langle\pi(g)v_{k}^{V},w_{j}^{V}\rangle-\langle\check{\pi}(g)\check{v}_{k}^{V},\check{w}_{j}^{V}\rangle)=\frac{\sqrt{2\dim V}}{\sqrt{|G|}}\mathrm{Im}(\langle\pi(g)v_{k}^{V},w_{j}^{V}\rangle),
\end{align*}
which are real-valued functions with unit $L^{2}$-norm that are mutually
orthogonal.

To randomize this basis, we randomize the choice of the basis $\{w_{j}^{V}\}_{j}$.
We fix an orthonormal basis $\{e_{j}^{V}\}_{j}$ of $V$ and then
given a Haar random unitary operator $u\in U(V)$, we set $w_{j}^{V}=ue_{j}^{V}$
for each $j=1,\ldots,\dim V$.

\subsection{Self-dual representations}

\label{sec:selfdualreps} A complex irreducible representation that
is equivalent to its dual has a conjugate-linear intertwining map
$J:V\to V$ such that $J^{2}=\pm\mathrm{Id}$. In the case $J^{2}=\mathrm{Id}$
the representation is called \emph{real} and in case $J^{2}=\mathrm{-Id}$
the representation is called \emph{quaternionic }\cite{Fu.Ha91}\emph{.
}It is not hard to check using uniqueness (up to scalars) of the $\pi$-invariant
inner product on $V$ that for all $v,w\in V$ 
\begin{equation}
\langle v,w\rangle=\langle J(w),J(v)\rangle.\label{eq:J-inner-product}
\end{equation}

\subsubsection{Real representations. \label{subsec:Real-representations.}}

In this case, $J$ defines a real structure for $V$. That is, $V=V_{J}\oplus iV_{J}$
where $V_{J}=\{v\in V:J(v)=v\}$ is a real vector space. It follows
from \eqref{eq:J-inner-product} that $\langle\bullet,\bullet\rangle$
restricts to a real valued symmetric inner product on $V_{J}$, and
the inner product on $V$ is obtained from this one by extension of
scalars from $\R$ to $\C$.

Since $J$ intertwines with $\pi$, for each $g\in G$ we have $\pi(g):V_{J}\to V_{J}$,
and so $\pi(A)$ is a symmetric operator on $(V_{J},\langle\bullet,\bullet\rangle)$.
Let $\{v_{k}^{V}\}$ denote an orthonormal basis of $\pi(A)$ eigenvectors
in $V_{J}$ with respect to the real inner product. By extension of
scalars, these also form an orthonormal eigenbasis of $\pi(A)$ acting
on $V$.

Fix an orthonormal basis $\{e_{j}^{V}\}$ of $V_{J}$ . Choosing a
Haar random orthogonal matrix $o\in O(V)$ we let $w_{j}^{V}\eqdf oe_{j}^{V}$
for each $1\leq j\leq\dim V$. The corresponding real random basis
of $\rho(A)$ eigenvectors in $\ell^{2}(G)$ is given by 
\begin{align*}
\varphi_{kj}^{V}(g)\eqdf\frac{\sqrt{\dim V}}{\sqrt{|G|}}\langle\pi(g)v_{k}^{V},w_{j}^{V}\rangle.
\end{align*}
These are the image under the inclusion $\check{V}\otimes V\to\ell^{2}(G)$
of the vectors $\check{w}_{j}^{V}\otimes v_{k}^{V}$ (this makes it
clear that they are $\rho(A)$ eigenvectors).

\subsubsection{Quaternionic representations. \label{subsec:Quaternionic-representations.}}

Next, suppose that $(\pi,V)$ is a quaternionic representation of
$G$. In this case, \eqref{eq:J-inner-product} implies
\[
\langle v,J(v)\rangle=\langle J^{2}(v),J(v)\rangle=-\langle v,J(v)\rangle
\]
hence $\langle v,J(v)\rangle=0$ for any $v\in V$. This implies $\dim V$
is even and since $\pi(A)$ is Hermitian and commutes with $J$ we
can find an orthonormal basis of $V$ of eigenvectors of $\pi(A)$
of the form $\{v_{k}^{V},J(v_{k}^{V})\}_{k=1}^{\frac{1}{2}\dim V}$. 

Fix an orthonormal basis $\{e_{j}^{V}\}$ of $V$. Choosing a Haar
random unitary matrix $u\in u(V)$ we let $w_{j}^{V}\eqdf ue_{j}^{V}$
for each $1\leq j\leq\dim V$. The corresponding real random basis
of $\rho(A)$ eigenvectors in $\ell^{2}(G)$ is given by 
\begin{align*}
x_{kj}^{V}(g) & \eqdf\frac{\sqrt{2\dim V}}{\sqrt{|G|}}\mathrm{Re}(\langle\pi(g)v_{k}^{V},w_{j}^{V}\rangle,\\
y_{kj}^{V}(g) & \eqdf\frac{\sqrt{2\dim V}}{\sqrt{|G|}}\mathrm{Im}(\langle\pi(g)v_{k}^{V},w_{j}^{V}\rangle.
\end{align*}
These are the image under the inclusion $\check{V}\otimes V\to\ell^{2}(G)$
of the vectors 
\begin{align*}
x_{kj}^{V} & \eqdf\frac{1}{\sqrt{2}}(\check{w}_{j}^{V}\otimes v_{k}^{V}+\widecheck{J(w_{j}^{V})}\otimes J(v_{k}^{V})),\\
y_{kj}^{V} & \eqdf\frac{1}{i\sqrt{2}}(\check{w}_{j}^{V}\otimes v_{k}^{V}-\widecheck{J(w_{j}^{V})}\otimes J(v_{k}^{V})),
\end{align*}
 and thus clearly they are $\rho(A)$ eigenvectors.

Putting together all of the different cases for the type of the representation
$\pi$, the random model for the real-valued eigenbasis of $\ell^{2}(G)$
has underlying topological space
\begin{align*}
X=\prod_{\substack{\{(\pi,V),(\check{\pi},\check{V})\}\subseteq\hat{G}\\
\pi\ \text{non-self-dual pair}
}
}U(V)\prod_{\substack{(\pi,V)\in\hat{G}\\
\pi\ \text{self-dual and quaternionic}
}
}U(V)\prod_{\substack{(\pi,V)\in\hat{G}\\
\pi\ \text{self-dual and real}
}
}O(V),
\end{align*}
equipped with the product probability measure 
\begin{align}
\P=\prod_{\substack{\{(\pi,V),(\check{\pi},\check{V})\}\subseteq\hat{G}\\
\pi\ \text{non-self-dual pair}
}
}\P_{U(V)}\prod_{\substack{(\pi,V)\in\hat{G}\\
\pi\ \text{self-dual and quaternionic}
}
}\P_{U(V)}\prod_{\substack{(\pi,V)\in\hat{G}\\
\pi\ \text{self-dual and real}
}
}\P_{O(V)},\label{eq:realbasismodel}
\end{align}
where $\P_{U(V)}$ is the Haar probability measure on the unitary
operators $U(V)$ of $V$, and $\P_{O(V)}$ is the Haar probability
measure on the orthogonal operators $O(V)$ of $V$.

\section{Deterministic error term for mean zero functions\label{sec:Deterministic-error-term}}

In this section we will derive an upper bound for 
\begin{align}
\left|\sum_{g\in G}f(g)|\varphi(g)|^{2}-\frac{1}{|G|}\sum_{g\in G}f(g)\right|,\label{eq:quedifference}
\end{align}
where $\varphi$ is one of the eigenbasis elements of $\ell^{2}(G)$
described in the previous section, and $f$ is a real-valued function
on the group $G$. In fact, we will further make the assumption that
\[
\sum_{g\in G}f(g)=0,
\]
so that we can instead just bound
\[
\left|\sum_{g\in G}f(g)|\varphi(g)|^{2}\right|.
\]
This can be done without any loss of generality since given a non-zero
mean function, we can consider $f-\frac{1}{|G|}\sum_{g\in G}f(g)$
which has zero mean, and then a bound on the above quantity for this
zero mean function provides a bound on the desired difference since
\begin{align*}
\left|\sum_{g\in G}\left(f(g)-\frac{1}{|G|}\sum_{h\in G}f(h)\right)|\varphi(g)|^{2}\right| & =\left|\sum_{g\in G}f(g)|\varphi(g)|^{2}-\frac{1}{|G|}\sum_{h\in G}f(h)\sum_{g\in G}|\varphi(g)|^{2}\right|\\
 & =\left|\sum_{g\in G}f(g)|\varphi(g)|^{2}-\frac{1}{|G|}\sum_{g\in G}f(g)\right|,
\end{align*}
as the eigenfunction $\varphi$ is normalized with respect to the
counting measure. The bounds we will obtain later will involve $\|f\|_{\ell^{2}}$,
but since the mean of $f$ is just the Fourier component of $f$ corresponding
to the constant eigenfunction, we have $\|f-\frac{1}{|G|}\sum_{g\in G}f(g)\|_{\ell^{2}}\leq\|f\|_{\ell^{2}}$
and so any bounds depending on the $\ell^{2}$-norm of the zero mean
function can just be bounded by the $\ell^{2}$-norm of the function
itself. 

Now, recall that there were three types of functions in the eigenbasis
dependent upon the type of irreducible representation that they come
from. In the case of complex irreducible representations that are
not real we have the following two types given by real and imaginary
parts of matrix coefficients 
\begin{description}
\item [{Type 1 - Real Part}] 
\begin{align*}
\varphi(g)=\frac{\sqrt{\dim V}}{\sqrt{2}\sqrt{|G|}}(\langle\pi(g)v,w\rangle+\langle\check{\pi}(g)\check{v},\check{w}\rangle),
\end{align*}
\item [{Type 2 - Imaginary Part}] 
\begin{align*}
\varphi(g)=\frac{\sqrt{\dim V}}{i\sqrt{2}\sqrt{|G|}}(\langle\pi(g)v,w\rangle-\langle\check{\pi}(g)\check{v},\check{w}\rangle).
\end{align*}
\end{description}
In the case of a complex irreducible representation that is real we
have the following type of basis element 
\begin{description}
\item [{Type 3 - Real Matrix Coefficient}] 
\begin{align*}
\varphi(g)=\frac{\sqrt{\dim V}}{\sqrt{|G|}}\langle\pi(g)v,w\rangle.
\end{align*}
\end{description}
In each of the above types, $(\pi,V)$ is an irreducible unitary representation
and $v,w\in V$ are unit vectors. 

We will show the following.
\begin{prop}
\label{prop:errorboundreal} Let $\varphi:G\to\R$ be one of types
1,2 or 3, and let $f:G\to\R$ have zero mean. If $\varphi$ is of
type 1 or type 2, then
\begin{align*}
\left|\sum_{g\in G}f(g)|\varphi(g)|^{2}\right|\leq & \left|\mathrm{Re}\left(\left\langle \frac{\mathrm{dim}V}{|G|}\sum_{g\in G}f(g)(\pi\otimes\check{\pi})(g)(v\otimes\check{v}),w\otimes\check{w}\right\rangle \right)\right|\\
 & \hspace{1.5cm}+\left|\mathrm{Re}\left(\left\langle \frac{\mathrm{dim}V}{|G|}\sum_{g\in G}f(g)(\pi\otimes\pi)(g)(v\otimes v),w\otimes w\right\rangle \right)\right|,
\end{align*}
and if $\varphi$ is of type 3, then 
\begin{align*}
\left|\sum_{g\in G}f(g)|\varphi(g)|^{2}\right|\leq & \left|\mathrm{Re}\left(\left\langle \frac{\mathrm{dim}V}{|G|}\sum_{g\in G}f(g)(\pi\otimes\check{\pi})(g)(v\otimes\check{v}),w\otimes\check{w}\right\rangle \right)\right|.
\end{align*}
\end{prop}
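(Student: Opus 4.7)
The plan is to rewrite each of the three eigenfunction types as a real or imaginary part of a single matrix coefficient of $\pi$, square, and then recognize the resulting scalar quantities as matrix coefficients of the tensor product representations $\pi\otimes\check\pi$ and $\pi\otimes\pi$. The key identity at the heart of the argument is that the Riesz identification $v\mapsto\check v$ intertwines $\pi$ with $\check\pi$ antilinearly, so that
\[
\langle\check\pi(g)\check v,\check w\rangle_{\check V}=\overline{\langle\pi(g)v,w\rangle_V}.
\]
Applying this to the type~1 and type~2 expressions collapses them to
\[
\varphi(g)=\tfrac{\sqrt{2\dim V}}{\sqrt{|G|}}\,\mathrm{Re}\langle\pi(g)v,w\rangle\quad\text{and}\quad\varphi(g)=\tfrac{\sqrt{2\dim V}}{\sqrt{|G|}}\,\mathrm{Im}\langle\pi(g)v,w\rangle
\]
respectively, while for type~3 the matrix coefficient $\langle\pi(g)v,w\rangle$ is already real since $v,w\in V_J$ and $\pi(g)$ preserves the real form $V_J$.

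Next, using the elementary identities $(\mathrm{Re}\,z)^{2}=\tfrac{1}{2}|z|^{2}+\tfrac{1}{2}\mathrm{Re}(z^{2})$, $(\mathrm{Im}\,z)^{2}=\tfrac{1}{2}|z|^{2}-\tfrac{1}{2}\mathrm{Re}(z^{2})$, and $z^{2}=z^{2}$ trivially in the real case, I expand $|\varphi(g)|^{2}$. For types~1 and~2 this yields
\[
|\varphi(g)|^{2}=\tfrac{\dim V}{|G|}\Bigl(|\langle\pi(g)v,w\rangle|^{2}\pm\mathrm{Re}\langle\pi(g)v,w\rangle^{2}\Bigr),
\]
and for type~3 only the first summand is present. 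The absorption of a factor of $2$ into the normalisation is exactly what makes these prefactors clean.

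Now I identify both pieces as matrix coefficients of tensor product representations. Using the convention that $\langle v_{1}\otimes v_{2},w_{1}\otimes w_{2}\rangle=\langle v_{1},w_{1}\rangle\langle v_{2},w_{2}\rangle$,
\[
\langle\pi(g)v,w\rangle^{2}=\langle(\pi\otimes\pi)(g)(v\otimes v),w\otimes w\rangle,
\]
while, using the conjugation identity above,
\[
|\langle\pi(g)v,w\rangle|^{2}=\langle\pi(g)v,w\rangle\langle\check\pi(g)\check v,\check w\rangle=\langle(\pi\otimes\check\pi)(g)(v\otimes\check v),w\otimes\check w\rangle.
\]
Since the first quantity is manifestly real and $f$ is real-valued, summing against $f$ and pulling out $\mathrm{Re}$ is a no-op for the first term and required only to package the second term; applying the triangle inequality to
\[
\sum_{g}f(g)|\varphi(g)|^{2}=\mathrm{Re}\!\left\langle\tfrac{\dim V}{|G|}\!\sum_{g}f(g)(\pi\otimes\check\pi)(g)(v\otimes\check v),w\otimes\check w\right\rangle\pm\mathrm{Re}\!\left\langle\tfrac{\dim V}{|G|}\!\sum_{g}f(g)(\pi\otimes\pi)(g)(v\otimes v),w\otimes w\right\rangle
\]
for types~1 and~2 (and only the first term for type~3) gives exactly the claimed bounds.

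There is no serious obstacle here: the proof is essentially a careful bookkeeping exercise. The one point requiring care is the sign convention and normalization of the inner product on $\check V$, and verifying that $\langle\check\pi(g)\check v,\check w\rangle=\overline{\langle\pi(g)v,w\rangle}$ under the conventions of \S\ref{sec:Background}; once this is in hand the rest is algebra. Note that the zero-mean hypothesis on $f$ is not used in the inequality itself, but only to replace the QUE difference \eqref{eq:quedifference} by $|\sum_{g}f(g)|\varphi(g)|^{2}|$ as explained at the start of the section.
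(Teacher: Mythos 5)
Your proposal is correct and follows essentially the same computation as the paper: the paper squares the four-term sum $\langle\pi(g)v,w\rangle+\langle\check\pi(g)\check v,\check w\rangle$ directly and pairs off conjugate terms, whereas you first collapse to $\mathrm{Re}\langle\pi(g)v,w\rangle$ (an identity already recorded in \S\ref{sec:Random-basis-construction}) and then apply $(\mathrm{Re}\,z)^2=\tfrac12|z|^2+\tfrac12\mathrm{Re}(z^2)$, but these are two packagings of the identical algebra and land on the same display \eqref{eq:splitupphi}. The subsequent identification with tensor-product matrix coefficients and the triangle inequality (using that $f$ is real) match the paper's proof.
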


\begin{proof}
Suppose that $\varphi$ is type 1. Then, 
\begin{align}
|\varphi(g)|^{2} & =\frac{1}{2}\frac{\dim V}{|G|}\left(\langle(\pi\otimes\pi)(g)(v\otimes v),w\otimes w\rangle+\langle(\check{\pi}\otimes\check{\pi})(g)(\check{v}\otimes\check{v}),\check{w}\otimes\check{w}\rangle\right.\nonumber \\
 & \hspace{1cm}+\left.\langle(\check{\pi}\otimes\pi)(g)(\check{v}\otimes v),\check{w}\otimes w\rangle+\langle(\pi\otimes\check{\pi})(g)(v\otimes\check{v}),w\otimes\check{w}\rangle\right)\nonumber \\
 & =\frac{\dim V}{|G|}\left(\mathrm{Re}\left(\langle(\pi\otimes\pi)(g)(v\otimes v),w\otimes w\rangle\right)+\mathrm{Re}\left(\langle(\pi\otimes\check{\pi})(g)(v\otimes\check{v}),w\otimes\check{w}\rangle\right)\right).\label{eq:splitupphi}
\end{align}
The result is then an immediate application of the triangle inequality
using the fact that $f$ is real-valued. The proof for type 2 functions
is essentially the same, and the proof for type 3 is even simpler
(one only needs to deal with $\pi\otimes\check{\pi}$ terms).
\end{proof}

\section{Probabilistic ingredients\label{sec:Probabilistic-ingredients}}

In this section, we outline some results that we will use in $\S$\ref{sec:proof-main-thm}
when bounding the probability that our random bases have the properties
of Theorems \ref{thm:main-thm-there-exists} and \ref{thm:main-theorem-random}.

\subsection{Large deviations estimates}

We begin by recalling that the $\chi$-squared distribution with $k$-degrees
of freedom, denoted by $\chi_{k}^{2}$, has probability density function
\begin{align}
f_{k}(x)=\frac{x^{\frac{k}{2}-1}e^{-\frac{x}{2}}}{2^{\frac{k}{2}}\Gamma\left(\frac{k}{2}\right)}\mathbf{1}_{\{x>0\}}.\label{eq:pdfchidist}
\end{align}
If $Z_{i},\ldots,Z_{k}$ are independent standard normal random variables,
then 
\begin{align*}
\sum_{i=1}^{k}Z_{i}^{2}\sim\chi_{k}^{2}.
\end{align*}
In this article we will use the following results regarding independent
$\chi_{1}^{2}$ and $\chi_{2}^{2}$ random variables.
\begin{lem}
\label{lem:large norm}If $X_{1},\ldots,X_{N}$ are independent $\chi_{1}^{2}$-distributed
random variables, then
\begin{align*}
\P\left(\sum_{i=1}^{N}X_{i}\leq\frac{N}{2}\right)\leq e^{-\frac{N}{12}}.
\end{align*}
\end{lem}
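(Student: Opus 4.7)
The plan is to prove this via a standard Chernoff (Laplace transform) bound applied to the lower tail. The key observation is that since each $X_i \sim \chi_1^2$, the moment generating function $\E[e^{-tX_i}]$ has a simple closed form, and tensorization across the independent $X_i$ reduces the problem to optimizing a one-variable function.

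First I would write, for any $t>0$,
\[
\P\!\left(\sum_{i=1}^N X_i \leq \tfrac{N}{2}\right) = \P\!\left(e^{-t\sum_i X_i} \geq e^{-tN/2}\right) \leq e^{tN/2}\prod_{i=1}^N \E\!\left[e^{-tX_i}\right],
\]
by Markov's inequality applied to the nonnegative random variable $e^{-t\sum_i X_i}$, together with independence of the $X_i$.

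Next I would compute the moment generating function using the density \eqref{eq:pdfchidist} with $k=1$: a direct integration (or the standard formula for the $\chi^2_k$ MGF) gives
\[
\E\!\left[e^{-tX_i}\right] = (1+2t)^{-1/2} \quad \text{for } t > -\tfrac{1}{2}.
\]
Substituting into the Chernoff bound yields
\[
\P\!\left(\sum_{i=1}^N X_i \leq \tfrac{N}{2}\right) \leq \exp\!\left(\tfrac{N}{2}\bigl(t - \log(1+2t)\bigr)\right).
\]

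Finally, I would optimize (or just pick a convenient value) for $t$. Differentiating $t-\log(1+2t)$ gives the minimizer $t=\tfrac{1}{2}$, where the exponent evaluates to $\tfrac{N}{2}\!\left(\tfrac{1}{2}-\log 2\right)$. Since $\log 2 > \tfrac{2}{3}$, one has $\tfrac{1}{2}-\log 2 < -\tfrac{1}{6}$, hence
\[
\tfrac{N}{2}\!\left(\tfrac{1}{2}-\log 2\right) \leq -\tfrac{N}{12},
\]
which gives the claimed bound $e^{-N/12}$. There is no real obstacle here; the only thing to check is the numerical inequality $\log 2 \geq \tfrac{2}{3}$, which is immediate from $e^{2/3} < e < 2 \cdot (\text{something})$ or simply $\log 2 \approx 0.693$. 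The constant $\tfrac{1}{12}$ in the statement is presumably chosen precisely to accommodate this slack.
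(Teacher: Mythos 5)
Your proof is correct and follows the same route as the paper: an exponential Chebyshev/Chernoff bound, the closed-form MGF $\E[e^{-tX_i}]=(1+2t)^{-1/2}$, the choice $t=\tfrac12$, and the numerical fact $\tfrac12-\log 2\leq -\tfrac16$.
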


\begin{proof}
By exponential Chebyshev, for any $A>0$
\begin{align*}
\P\left(\sum_{i=1}^{N}X_{i}\leq t\right) & \leq e^{At}\E\left[e^{-A\sum_{i}X_{i}}\right]=e^{At}\prod_{i=1}^{N}\E\left[e^{-AX_{i}}\right]\\
 & =e^{At}\prod_{i=1}^{N}\frac{1}{\sqrt{2\pi}}\int_{0}^{\infty}x^{-\frac{1}{2}}e^{-x(\frac{1}{2}+A)}\d x=e^{At}\left(\frac{1}{\sqrt{1+2A}}\right)^{N}.
\end{align*}
Taking $t=\frac{N}{2}$ and $A=\frac{1}{2}$ (so that $A-\log(1+2A)\leq-\frac{1}{6}$)
we obtain the stated result. 
\end{proof}
\begin{lem}
\label{lem:large-deviations} Suppose that $(a_{1},\ldots,a_{N})\in\R^{N}$
and there exist constants $A,C>0$ such that 
\begin{enumerate}
\item $\sum_{i=1}^{N}a_{i}=0$, 
\item $\sum_{i=1}^{N}a_{i}^{2}\leq C$, and 
\item $|a_{i}|\leq A$ for each $1\leq i\leq N$. 
\end{enumerate}
Then, 
\begin{itemize}
\item[(i)] If $X_{1},\ldots,X_{N}$ are independent $\chi_{1}^{2}$-distributed
random variables then for all $t>0$, 
\begin{align*}
\P\left(\left|\sum_{i=1}^{N}a_{i}X_{i}\right|\geq t\right)\leq2\left(\frac{At}{C}+1\right)^{\frac{C}{2A^{2}}}e^{-\frac{t}{2A}}.
\end{align*}
\item[(ii)] If $X_{1},\ldots,X_{N}$ are independent $\chi_{2}^{2}$-distributed
random variables then for all $t>0$, 
\begin{align*}
\P\left(\left|\sum_{i=1}^{N}a_{i}X_{i}\right|\geq t\right)\leq2\left(\frac{At}{2C}+1\right)^{\frac{C}{A^{2}}}e^{-\frac{t}{2A}}.
\end{align*}
\end{itemize}
\end{lem}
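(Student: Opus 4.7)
The plan is to apply the exponential Chebyshev (Chernoff) bound with the closed-form moment generating function of the $\chi^{2}$ distribution, handling $k=1$ and $k=2$ in parallel. Integration against the density \eqref{eq:pdfchidist} gives $\E e^{\lambda X_{i}} = (1-2\lambda)^{-k/2}$ for $X_{i} \sim \chi_{k}^{2}$ and $\lambda < 1/2$. By independence, for any $\lambda \in (0, 1/(2A))$,
\[
\P\!\left(\sum_{i=1}^{N} a_{i} X_{i} \geq t\right) \;\leq\; \exp\!\left(-\lambda t - \tfrac{k}{2}\sum_{i=1}^{N} \log(1 - 2\lambda a_{i})\right),
\]
with the admissibility condition $\lambda a_{i} < 1/2$ guaranteed for every $i$ by $|a_{i}| \leq A$.

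The heart of the argument is a sharp bound on the log-MGF sum that uses all three hypotheses simultaneously. Writing $\alpha_{i} \eqdf a_{i}/A \in [-1, 1]$ and $\mu \eqdf 2\lambda A \in (0, 1)$, I would establish the elementary inequality
\[
-\tfrac{1}{2}\log(1 - \mu\alpha) - \tfrac{\mu\alpha}{2} \;\leq\; \alpha^{2}\Bigl(-\tfrac{1}{2}\log(1-\mu) - \tfrac{\mu}{2}\Bigr) \qquad \text{for all } \alpha \in [-1, 1],
\]
via the series expansion $-\tfrac{1}{2}\log(1-\mu\alpha) - \mu\alpha/2 = \sum_{n \geq 2}(\mu\alpha)^{n}/(2n)$. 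For $\alpha \in [0, 1]$ each term is bounded by $\alpha^{2}\mu^{n}/(2n)$ since $\alpha^{n-2} \leq 1$, and the case $\alpha \in [-1, 0)$ reduces to the previous one by bounding the alternating series termwise by the corresponding positive one. Substituting $\alpha = \alpha_{i}$, summing, and using $\sum a_{i} = 0$ to annihilate the linear term on the left and $\sum a_{i}^{2} \leq C$ on the right then yields
\[
-\tfrac{k}{2}\sum_{i=1}^{N}\log(1 - 2\lambda a_{i}) \;\leq\; \frac{kC}{A^{2}}\Bigl(-\tfrac{1}{2}\log(1 - 2\lambda A) - \lambda A\Bigr).
\]

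It remains to optimise over $\lambda$. Differentiating the resulting exponent in $\lambda$ and setting it to zero produces the explicit minimiser $\lambda^{*} = t/(2(At + kC))$, which satisfies $2\lambda^{*}A = At/(At+kC) < 1$ and is therefore admissible. Substituting $\lambda^{*}$, the two $\lambda$-linear contributions telescope to $-t/(2A)$, while the logarithmic contribution becomes $(kC/(2A^{2}))\log(1 + At/(kC))$, delivering the one-sided bound
\[
\P\!\left(\sum_{i=1}^{N} a_{i} X_{i} \geq t\right) \;\leq\; \Bigl(1 + \tfrac{At}{kC}\Bigr)^{kC/(2A^{2})} e^{-t/(2A)},
\]
which recovers (i) at $k = 1$ and (ii) at $k = 2$. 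The lower tail is obtained by rerunning the same argument with $(a_{1},\ldots,a_{N})$ replaced by $(-a_{1},\ldots,-a_{N})$ (which still satisfies the three hypotheses), and the factor of $2$ in the statement comes from the union bound over the two tails. The main step requiring care is verifying the convexity-type inequality in the second display; everything else is essentially algebraic bookkeeping.
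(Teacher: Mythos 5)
Your proposal is correct and follows essentially the same route as the paper: exponential Chebyshev with the explicit $\chi_k^2$ moment generating function, a series expansion of the log-MGF exploiting all three hypotheses (zero sum to kill the linear term, $\ell^2$ bound, $\ell^\infty$ bound to drop the factor $\alpha^{n-2}$), then closed-form optimization of the Chernoff parameter; the minimizer you compute, $\lambda^* = t/(2(At+kC))$, matches the paper's $\varepsilon = \tfrac{1}{2A} - \tfrac{C/(2A^2)}{t+C/A}$ at $k=1$. The only cosmetic difference is that you package the termwise bound on the power series as a pointwise convexity-type inequality and carry the parameter $k$ through the whole argument, whereas the paper writes out $k=1$ in full and says the $k=2$ case is identical.
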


\begin{rem}
Note that condition (3) in Lemma \ref{lem:large-deviations} immediately
follows from condition (2) since we must have $|a_{i}|\leq\sqrt{C}$
for all $1\leq i\leq N$. Likewise, condition (2) follows from condition
(3) with $C=A^{2}N$. 
\end{rem}

\begin{proof}
We start with (i). Notice that 
\begin{align*}
\P\left(\left|\sum_{i=1}^{N}a_{i}X_{i}\right|\geq t\right)=\P\left(\sum_{i=1}^{N}a_{i}X_{i}\geq t\right)+\P\left(-\sum_{i=1}^{N}a_{i}X_{i}\geq t\right).
\end{align*}
Now for any $\varepsilon\in[0,\frac{1}{2A})$, exponential Chebyshev
inequality along with independence of the $X_{i}$ and the formula
\eqref{eq:pdfchidist} implies that 
\begin{align*}
 & \P\left(\sum_{i=1}^{N}a_{i}X_{i}\geq t\right)\\
 & \leq e^{-t\varepsilon}\E\left(\exp\left(\varepsilon\sum_{i=1}^{N}a_{i}X_{i}\right)\right) &  & =e^{-t\varepsilon}\prod_{i=1}^{N}\E\exp\left(\varepsilon a_{i}X_{i}\right)\\
 & =e^{-t\varepsilon}\prod_{i=1}^{N}\frac{1}{\sqrt{2\pi}}\int_{0}^{\infty}x^{-\frac{1}{2}}e^{-\frac{x}{2}(1-2\varepsilon a_{i})}\d x &  & =e^{-t\varepsilon}\prod_{i=1}^{N}\frac{1}{\sqrt{1-2\varepsilon a_{i}}}\\
 & =e^{-t\varepsilon}\exp\left(-\frac{1}{2}\sum_{i=1}^{N}\log(1-2\varepsilon a_{i})\right) &  & =e^{-t\varepsilon}\exp\left(\varepsilon\sum_{i=1}^{N}a_{i}+\frac{1}{2A^{2}}\sum_{i=1}^{N}a_{i}^{2}\sum_{n=2}^{\infty}\frac{(2A\varepsilon)^{n}\left(\frac{a_{i}}{A}\right)^{n-2}}{n}\right).
\end{align*}
The final equality follows from $|2\varepsilon a_{i}|<1$. Now by
assumption (3), we have $\left|\frac{a_{i}}{A}\right|^{n-2}\leq1$
and so using assumptions (1) and (2) we have 
\begin{align*}
\P\left(\sum_{i=1}^{N}a_{i}X_{i}\geq t\right) & \leq e^{-t\varepsilon}\exp\left(\frac{C}{2A^{2}}\sum_{n=2}^{\infty}\frac{(2A\varepsilon)^{n}}{n}\right)\\
 & =e^{-t\varepsilon}\exp\left(\log\left((1-2A\varepsilon)^{-\frac{C}{2A^{2}}}\right)-\frac{C\varepsilon}{A}\right)\\
 & =\frac{e^{-\varepsilon\left(t+\frac{C\varepsilon}{A}\right)}}{(1-2A\varepsilon)^{\frac{C}{2A^{2}}}},
\end{align*}
the second equality following from the fact that $|2A\varepsilon|<1$.
We now choose $\varepsilon\in[0,\frac{1}{2A})$ that minimizes this
upper bound. This can readily been seen to be given by 
\begin{align*}
\varepsilon=\frac{1}{2A}-\frac{\frac{C}{2A^{2}}}{t+\frac{C}{A}}\in\left[0,\frac{1}{2A}\right).
\end{align*}
We hence obtain the upper bound 
\begin{align*}
\P\left(\sum_{i=1}^{N}a_{i}X_{i}\geq t\right)\leq\left(\frac{At}{C}+1\right)^{\frac{C}{2A^{2}}}e^{-\frac{t}{2A}}.
\end{align*}
The same bound applies to $\P\left(-\sum_{i=1}^{N}a_{i}X_{i}\geq t\right)$
since we may set $b_{i}=-a_{i}$ and then $(b_{1},\ldots,b_{N})\in\R^{N}$
satisfies assumptions (1), (2) and (3) so that the above computations
still hold. 

The proof of (ii) follows identically but using the probability density
function $f_{2}(x)$ rather than $f_{1}(x)$. 
\end{proof}

\subsection{Random matrix estimates}
\begin{lem}
\label{lem:complex-random-matrices}Suppose that $V$ is an $n$-dimensional
complex Hermitian inner product space with orthonormal basis $\{e_{i}\}_{i=1}^{n}$,
and $u$ is a Haar random unitary matrix in $U(V)$. Then, 
\begin{enumerate}
\item For any fixed vector $\beta=\sum_{1\leq i,j\leq n}\beta_{ij}e_{i}\otimes\check{e_{j}}\in V\otimes\check{V}$
with $\beta_{ij}\in\C$, $\sum_{1\leq i,j\leq n}|\beta_{ij}|^{2}\leq C$
and $\sum_{i=1}^{n}\beta_{ii}=0$, for any $1\leq k\leq n$, and any
$T>0$ we have 
\[
\P_{u\in U(V)}\left(|\langle\beta,ue_{k}\otimes\widecheck{ue_{k}}\rangle|\geq T\right)\leq6e^{-\frac{nT}{32\sqrt{C}}}+2e^{-\frac{n}{6}}.
\]
\item For any fixed vector $\alpha=\sum_{1\leq i,j\leq n}\alpha_{ij}e_{i}\otimes e_{j}\in V\otimes V$
with $\alpha_{ij}\in\C$ and $\sum_{1\leq i,j\leq n}|\alpha_{ij}|^{2}\leq C$,
for any $1\leq k\leq n$, and any $T>0$ we have 
\[
\P_{u\in U(V)}\left(|\mathrm{Re}\langle\alpha,ue_{k}\otimes ue_{k}\rangle|\geq T\right)\leq6e^{-\frac{nT}{32\sqrt{C}}}+2e^{-\frac{n}{6}}.
\]
\end{enumerate}
\end{lem}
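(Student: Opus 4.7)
The plan is to use the $U(V)$-invariance of Haar measure: for any fixed $k$, $v := u e_k$ is uniformly distributed on the unit sphere of $V$, and may therefore be realized as $Z/\|Z\|$ with $Z = X + iY$ having $X_j, Y_j$ i.i.d.\ $\mathcal{N}(0,1)$. This reduces both probability estimates to tail bounds for quadratic expressions in the independent real Gaussians $X_j, Y_j$. Since $\|Z\|^2 = \sum_{j=1}^n (X_j^2 + Y_j^2)$ is a sum of $2n$ i.i.d.\ $\chi_1^2$ variables, Lemma \ref{lem:large norm} gives $\P(\|Z\|^2 < n) \leq e^{-n/6}$; I will work throughout on the ``good event'' $\{\|Z\|^2 \geq n\}$, on which dividing by $\|Z\|^2$ costs at most a factor of $1/n$.

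For part (1), the inner product equals $v^{*} M v$ with $M$ the complex $n\times n$ matrix of entries $M_{ij} = \beta_{ij}$, satisfying $\mathrm{tr}(M) = 0$ and $\|M\|_F^2 \leq C$. The crucial reduction is to split $M = H + iK$ into its Hermitian pieces $H = (M + M^{*})/2$ and $K = (M - M^{*})/(2i)$; both are Hermitian, trace-zero, and satisfy $\|H\|_F^2 + \|K\|_F^2 = \|M\|_F^2 \leq C$. By the triangle inequality $|v^{*}Mv| \leq |v^{*}Hv| + |v^{*}Kv|$, so a union bound reduces matters to $\P(|v^{*}Hv| \geq T/2)$ and the analogous statement for $K$. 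For each, the spectral theorem provides a unitary diagonalization (absorbed by Haar invariance of $u$), turning $H$ into $\mathrm{diag}(\lambda_1, \ldots, \lambda_n)$ with $\sum_i \lambda_i = 0$, $\sum_i \lambda_i^2 \leq C$, and $|\lambda_i| \leq \sqrt{C}$. Then $v^{*}Hv = \|Z\|^{-2}\sum_i \lambda_i |Z_i|^2$, the $|Z_i|^2$ are i.i.d.\ $\chi_2^2$, and on the good event $|v^{*}Hv| \geq T/2$ forces $\bigl|\sum_i \lambda_i |Z_i|^2\bigr| \geq nT/2$. Lemma \ref{lem:large-deviations}(ii), applied with $A = \sqrt{C}$ and $t = nT/2$, then produces the required exponential tail in $nT/\sqrt{C}$; any small polynomial prefactor $(x+1)$ can be absorbed using $(x+1)e^{-x} \leq 2e^{-x/2}$.

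For part (2), I use the Autonne--Takagi factorization. First, $\langle \alpha, v \otimes v\rangle$ depends only on the symmetric part $\alpha^{\mathrm{sym}} = \tfrac12(\alpha + \alpha^{\mathrm{swap}})$, whose Frobenius norm is still at most $\sqrt{C}$. Every complex symmetric matrix admits a factorization $\alpha^{\mathrm{sym}} = U D U^{T}$ with $U \in U(V)$ unitary and $D = \mathrm{diag}(d_1, \ldots, d_n)$ having nonnegative real entries satisfying $\sum_k d_k^2 = \|\alpha^{\mathrm{sym}}\|_F^2 \leq C$. Substituting yields $\langle \alpha, v \otimes v \rangle = \sum_k d_k w_k^2$, where $w := U^{T}\bar{v}$ is again a uniform random unit vector (complex conjugation being a measure-preserving involution of $S^{2n-1}$, and $U^{T}$ being unitary). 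Realizing $w = Z/\|Z\|$ with $Z_k = X_k + iY_k$ and taking real parts, a direct computation gives $\mathrm{Re}\langle \alpha, v \otimes v\rangle = \|Z\|^{-2}\sum_k d_k(X_k^2 - Y_k^2)$. On the good event this is dominated by $n^{-1}\bigl|\sum_k d_k(X_k^2 - Y_k^2)\bigr|$, which is a signed linear combination of the $2n$ i.i.d.\ $\chi_1^2$ variables $\{X_k^2, Y_k^2\}$ with weights $(d_1, -d_1, \ldots, d_n, -d_n)$ summing to zero and of squared total at most $2C$. Lemma \ref{lem:large-deviations}(i) with $A = \sqrt{C}$ and $t = nT$ then closes the argument.

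The \emph{main conceptual obstacle} in each part is identifying a normal form in which the quadratic expression becomes a weighted sum of \emph{independent} $\chi^2$ random variables, allowing application of the elementary estimates of $\S$\ref{sec:Probabilistic-ingredients}. Without the Hermitian splitting in part (1), the form $v^{*}Mv$ for a non-normal $M$ retains genuine off-diagonal Gaussian-chaos terms; without the Autonne--Takagi factorization in part (2), the quadratic $\sum_{ij} \alpha_{ij} \overline{v_i v_j}$ likewise keeps irreducible cross-terms. Either situation would require a Hanson--Wright-type bound instead of the weighted $\chi^2$ estimates already developed. The precise numerical constants in the statement (the factors $6$ and $32$, and the $2$ in front of $e^{-n/6}$) are expected to be produced by the slack in the union bounds, the particular way $T$ is partitioned, and the standard polynomial-versus-exponential inequalities used to tidy up the tails.
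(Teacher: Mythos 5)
Your argument is correct and, for Part~1, follows the paper's own proof essentially verbatim: realize $v=ue_k$ as a normalized complex Gaussian vector, reduce $\langle\beta,v\otimes\check v\rangle$ to the quadratic form $v^*Mv$, split $M$ into its Hermitian real and imaginary parts, diagonalize each by a unitary (absorbed by Haar invariance), and condition on the event $\{\|Z\|^2\geq n\}$ so that Lemma~\ref{lem:large norm} controls the denominator and Lemma~\ref{lem:large-deviations}(ii) controls a weighted sum of $\chi_2^2$'s. For Part~2 you take a genuinely different route at the diagonalization step. The paper also reduces to the tensor-symmetric part of $\alpha$, but then writes that symmetric complex matrix as $S_1+iS_2$ with $S_1,S_2$ real symmetric, diagonalizes each by a real orthogonal matrix, and takes a union bound over two events, each producing a weighted sum $\sum_i\lambda_i(X_i-Y_i)$ of $\chi_1^2$ variables to which Lemma~\ref{lem:large-deviations}(i) applies. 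You instead invoke the Autonne--Takagi factorization $\alpha^{\mathrm{sym}}=UDU^{\top}$ with $U$ unitary and $D$ real nonnegative diagonal, which diagonalizes the entire complex symmetric quadratic form at once and, since $U^{\top}$ is unitary and complex conjugation preserves Haar measure on the sphere, transports the unit vector to another uniform one. This yields a single weighted sum $\|Z\|^{-2}\sum_kd_k(X_k^2-Y_k^2)$ with coefficient vector $(d_1,-d_1,\dots,d_n,-d_n)$, so the zero-sum hypothesis of Lemma~\ref{lem:large-deviations} holds automatically, just as in the paper. Your version avoids one union bound, at the cost of invoking a somewhat less elementary matrix factorization than the real spectral theorem; either way the endgame is identical. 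You have not chased the numerical constants $6$, $32$, and $2$, but you correctly identify that these arise from the threshold-splitting and from absorbing the polynomial prefactors in Lemma~\ref{lem:large-deviations} via inequalities of the form $(x+1)e^{-x}\leq 2e^{-x/2}$, which is exactly what the paper does.
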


\begin{proof}
\emph{Proof of Part 1. }We have $|\langle\beta,ue_{k}\otimes\widecheck{ue_{k}}\rangle|=|\langle u^{-1}Mue_{k},e_{k}\rangle|$
where $M\in\End(V)$ is the operator defined by $M(e_{j})=\sum_{i}\beta_{ij}e_{i}$.
The conditions on $\beta$ imply that $M$ has zero trace and Hilbert-Schmidt
norm bounded by $\sqrt{C}$.

Write $M=H_{1}+iH_{2}$ where $H_{1}\eqdf\frac{1}{2}\left(M+M^{*}\right)$
and $H_{2}\eqdf\frac{1}{2i}\left(M-M^{*}\right)$ are Hermitian operators.
We have $\|H_{1}\|_{\HS}^{2}+\|H_{2}\|_{\HS}^{2}=\|M\|_{\HS}^{2}\leq C$
and hence
\[
\tr(H_{1})=\tr(H_{2})=0,\quad\|H_{1}\|_{\HS}^{2},\:\|H_{2}\|_{\HS}^{2}\leq C.
\]
Also,
\begin{equation}
\P\left(|\langle\beta,ue_{k}\otimes\widecheck{ue_{k}}\rangle|\geq T\right)\leq\sum_{i=1,2}\P\left(|\langle u^{-1}H_{i}ue_{k},e_{k}\rangle|\geq\frac{T}{2}\right).\label{eq:split-up}
\end{equation}
Since each $H_{i}$ is Hermitian, it is conjugate to a real diagonal
matrix $D_{i}$ with the same Hilbert-Schmidt norm and trace zero
by a unitary operator, and by bi-invariance of Haar measure, we obtain
\[
\P\left(|\langle u^{-1}H_{i}ue_{k},e_{k}\rangle|\geq\frac{T}{2}\right)\leq\P\left(|\langle u^{-1}D_{i}ue_{1},e_{1}\rangle|\geq\frac{T}{2}\right).
\]
We treat only $D_{1}$ as the bound for $D_{2}$ is the same. Thus
we can assume that $H_{1}=D_{1}=\mathrm{diag}(\lambda_{1},\ldots,\lambda_{\dim V})$
with 
\[
\sum_{i}\lambda_{i}=0,\quad\sum_{i}|\lambda_{i}|^{2}\leq C,
\]
and we have 
\[
|\langle u^{-1}D_{i}ue_{1},e_{1}\rangle|=\sum_{i}\lambda_{i}|u_{i1}|^{2}.
\]
As is well-known\footnote{A Haar random unitary matrix can be obtained by considering a random
matrix whose entries are i.i.d. standard complex normal random variables,
and then making the columns orthonormal by a Gram-Schmidt procedure
on the columns (see for example $\S\S$1.2 of \cite{Me19}). Carrying
out this algorithm starting with the 1\textsuperscript{st} column
just normalizes the column.} the entries $u_{i1}=\frac{1}{\sqrt{N}}\eta_{i}$ where $\eta_{i}$
are independent standard complex normal random variables and 
\[
N\eqdf\sum_{i=1}^{n}|\eta_{i}|^{2}=\frac{1}{2}\sum_{i=1}^{2n}Y_{i}
\]
 where $Y_{i}$ are independent $\chi_{1}^{2}$ random variables.
Hence by Lemma \ref{lem:large norm} 
\begin{equation}
\P\left(N\leq\frac{n}{2}\right)\leq e^{-\frac{n}{6}}.\label{eq:prob-denom}
\end{equation}
Thus with probability at least $1-e^{-\frac{n}{6}}$, we have $N\geq\frac{n}{2}$.
We have
\[
|\langle u^{-1}D_{i}ue_{1},e_{1}\rangle|=\frac{1}{2N}\left|\sum_{i=1}^{n}\lambda_{i}X_{i}\right|
\]
where $X_{i}$ are independent $\chi_{2}^{2}$ distributed random
variables, and so by Lemma \ref{lem:large-deviations} Part (ii) with
$C=C$ and $A=\sqrt{C}$
\begin{align}
\P\left(\left|\sum_{i=1}^{n}\lambda_{i}X_{i}\right|\geq\frac{nT}{4}\right)\leq\left(2+\frac{nT}{4\sqrt{C}}\right)e^{-\frac{nT}{8\sqrt{C}}}.\label{eq:prob-numerator}
\end{align}
Combining \eqref{eq:prob-denom} and \eqref{eq:prob-numerator} then
gives
\[
\P\left(|\langle u^{-1}D_{i}ue_{1},e_{1}\rangle|\geq\frac{T}{2}\right)\leq\left(2+\frac{nT}{4\sqrt{C}}\right)e^{-\frac{nT}{8\sqrt{C}}}+e^{-\frac{n}{6}}\leq3e^{-\frac{nT}{32\sqrt{C}}}+e^{-\frac{n}{6}}.
\]
Part 1 then follows from \eqref{eq:split-up}. 

\emph{Proof of Part 2.} This is similar except here we let $M\in\End(V)$
be the operator defined by $M(e_{j})=\sum_{i}\alpha_{ij}e_{i}$ and
write $M=S+R$ with $S\eqdf\frac{1}{2}(M+M^{T})$ and $R\eqdf\frac{1}{2}\left(M-M^{T}\right)$
where transpose is defined with respect to the real inner product
$\mathrm{Re}\langle\bullet,\bullet\rangle$. We have $u^{T}Ru=0$
so $R$ makes no contribution to $\langle$$\alpha,ue_{k}\otimes ue_{k}\rangle$.

The rest of the proof follows analogous lines to the proof of part
1, diagonalizing the real and imaginary parts of $S$ by orthogonal
(unitary) matrices. This leads to bounding $\P\left(\left|\mathrm{Re}\left(\sum_{i=1}^{n}\lambda_{i}\eta_{i}^{2}\right)\right|\geq\frac{nT}{4}\right)$
and $\P\left(\left|\mathrm{Im}\left(\sum_{i=1}^{n}\lambda'_{i}\eta_{i}^{2}\right)\right|\geq\frac{nT}{4}\right)$
where $\lambda_{i},\lambda'_{i}\in\R$, $\sum\lambda_{i}^{2},\sum(\lambda'_{i})^{2}\leq C$
and $\eta_{i}$, $1\leq i\leq n$ are independent standard complex
normals. For the first we have
\begin{align*}
\P\left(\left|\mathrm{Re}\left(\sum_{i=1}^{n}\lambda_{i}\eta_{i}^{2}\right)\right|\geq\frac{nT}{4}\right) & =\P\left(\left|\sum_{i=1}^{n}\lambda_{i}(x_{i}^{2}-y_{i}^{2})\right|\geq\frac{nT}{4}\right)\\
 & =\P\left(\left|\sum_{i=1}^{n}\lambda_{i}(X_{i}-Y_{i})\right|\geq\frac{nT}{2}\right)
\end{align*}
where this time, $X_{i}$ and $Y_{i}$ are independent $\chi_{1}^{2}$
distributed random variables. One can apply Lemma \ref{lem:large-deviations}
Part (i) with 
\[
a_{i}\eqdf\begin{cases}
\lambda_{i} & \text{if \ensuremath{i=1,\ldots,n},}\\
-\lambda_{i-n} & \text{if \ensuremath{i=n+1,\ldots,2n} }
\end{cases}
\]
 to obtain 
\[
\P\left(\left|\mathrm{Re}\left(\sum_{i=1}^{n}\lambda_{i}\eta_{i}^{2}\right)\right|\geq\frac{nT}{4}\right)\leq\left(2+\frac{nT}{2\sqrt{C}}\right)e^{-\frac{nT}{4\sqrt{C}}}.
\]
Dealing with $\P\left(\left|\mathrm{Im}\left(\sum_{i=1}^{n}\lambda_{i}'\eta_{i}^{2}\right)\right|\geq\frac{nT}{4}\right)$
is similar. These lead to the stated result.
\end{proof}
\begin{lem}
\label{lem:real-random-matrices}Suppose that $V$ is a real inner
product space with $n\eqdf\dim V$. Then for any fixed vector $\beta=\sum_{1\leq i,j\leq n}\beta_{ij}e_{i}\otimes\check{e_{j}}\in V\otimes\check{V}$,
$\sum_{1\leq i,j\leq n}|\beta_{ij}|^{2}\leq C$ and $\sum_{i=1}^{n}\beta_{ii}=0$,
for any $1\leq k\leq n$, and any $T>0$ we have 
\[
\P_{o\in O(V)}\left(|\langle\beta,oe_{k}\otimes\widecheck{oe_{k}}\rangle|\geq T\right)\leq6e^{-\frac{nT}{32\sqrt{C}}}+2e^{-\frac{n}{12}}.
\]
\end{lem}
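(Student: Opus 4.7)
The argument follows the template of Lemma \ref{lem:complex-random-matrices} Part 1, so I describe only the points of departure. Define $M \in \End(V)$ by $M(e_j) = \sum_i \beta_{ij} e_i$, so that $\langle \beta, oe_k \otimes \widecheck{oe_k}\rangle = \langle Moe_k, oe_k\rangle$; by hypothesis, $\tr M = 0$ and $\|M\|_{\HS} \leq \sqrt{C}$. The complex Hermitian split $M = H_1 + iH_2$ from the previous lemma has no direct analogue because $V$ is real. Instead, one decomposes $M = S + R$ with $S = \tfrac{1}{2}(M + M^T)$ symmetric and $R = \tfrac{1}{2}(M - M^T)$ antisymmetric. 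The key observation is that $\langle R oe_k, oe_k\rangle = 0$, since the real inner product is symmetric and $R^T = -R$; hence only $S$ contributes, and it still satisfies $\tr S = 0$, $\|S\|_{\HS} \leq \sqrt C$. (Should one wish to allow complex $\beta_{ij}$, first split $M$ into real and imaginary parts and treat each separately, incurring the factor of $2$ present in the final estimate.)

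Once $S$ has been isolated, orthogonal diagonalization gives $S = PDP^T$ with $D = \mathrm{diag}(\lambda_1,\ldots,\lambda_n)$ satisfying $\sum \lambda_i = 0$ and $\sum \lambda_i^2 \leq C$. Bi-invariance of Haar measure on $O(V)$ and the transitive action of $O(V)$ on the standard basis reduce the problem to estimating $\P(|\langle Doe_1, oe_1\rangle| \geq T)$. Realizing the first column of $o$ as $(\eta_1,\ldots,\eta_n)/\sqrt{N}$, where the $\eta_i$ are i.i.d.\ standard \emph{real} normals and $N = \sum_i \eta_i^2$, this becomes
\[
\P\!\left(\frac{1}{N}\Bigl|\sum_i \lambda_i X_i\Bigr| \geq T\right), \qquad X_i \eqdf \eta_i^2 \sim \chi_1^2.
\]

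From here I would mirror the denominator/numerator split from the complex proof. Lemma \ref{lem:large norm} applied to $n$ copies of $\chi_1^2$ gives $\P(N \leq n/2) \leq e^{-n/12}$, which accounts for the $e^{-n/12}$ factor in the conclusion (replacing the $e^{-n/6}$ of the unitary case; the difference is that a column of $O(V)$ is a sum of $n$ rather than $2n$ independent $\chi_1^2$ variables). On the complementary event, $|\langle Doe_1, oe_1\rangle| \geq T$ forces $|\sum \lambda_i X_i| \geq nT/2$; feed this to Lemma \ref{lem:large-deviations}(i) with $A = \sqrt C$ and simplify via the elementary bound $(x+1)^{1/2} \leq \tfrac{3}{2} e^{7x/16}$, exactly as in the complex case, to obtain a tail of $3 e^{-nT/(32\sqrt C)}$. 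A union bound (including the doubling from any real/imaginary split) then yields the stated estimate $6 e^{-nT/(32\sqrt C)} + 2 e^{-n/12}$.

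The one genuinely new subtlety relative to Lemma \ref{lem:complex-random-matrices} is the non-diagonalizability of general real matrices under orthogonal conjugation; handling this via the symmetric/antisymmetric split and noting that the antisymmetric part contributes nothing to $\langle \cdot\, oe_k, oe_k\rangle$ is the only step with no counterpart in the complex proof. Everything else is a direct transcription with $\chi_1^2$ in place of $\chi_2^2$.
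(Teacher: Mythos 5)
Your argument is correct and matches the approach the paper's brief sketch points to: isolate the symmetric part $S$ (the antisymmetric part contributes nothing to $\langle\cdot\, oe_k,oe_k\rangle$), diagonalize $S$ orthogonally, reduce by Haar bi-invariance to a single random column, realize that column as a normalized real Gaussian vector, and split into a ``small denominator'' event ($\P(N\leq n/2)\leq e^{-n/12}$ via Lemma~\ref{lem:large norm}) and a numerator tail estimate.

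Two small remarks. First, you invoke Lemma~\ref{lem:large-deviations} \emph{Part (i)}, matching the fact that $\eta_i^2\sim\chi_1^2$ for real standard normals; the paper's one-line proof says ``Part (ii),'' but the real column indeed gives $\chi_1^2$ (not $\chi_2^2$) entries, so your bookkeeping is the correct one. (Either part in fact yields a tail dominated by $3e^{-nT/(32\sqrt C)}$, so the final statement is unaffected.) Second, your closing claim that the symmetric/antisymmetric split ``has no counterpart in the complex proof'' is not quite right: exactly the same decomposition $M=S+R$ with $u^TRu=0$ already appears in Part~2 of Lemma~\ref{lem:complex-random-matrices}. The genuinely new ingredients relative to Part~1 of that lemma are (a) $\chi_1^2$ in place of $\chi_2^2$, which is why the denominator bound weakens from $e^{-n/6}$ to $e^{-n/12}$ and why Part~(i) is used, and (b) that when $\beta_{ij}$ is real (as in the application via Proposition~\ref{prop:orthogonal}) no Hermitian-style splitting is needed at all, so one even gets the stronger bound $3e^{-nT/(32\sqrt C)}+e^{-n/12}$; the stated constants $6$ and $2$ merely accommodate the complex-$\beta_{ij}$ case you handle in your parenthetical.
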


\begin{proof}
This is just the real version of Lemma \ref{lem:complex-random-matrices}
Part 1. The proof is along exactly the same lines, using that the
first column of an orthogonal random matrix is obtained by choosing
independent standard real normal random variables as the entries,
and then normalizing. Accordingly, one ends up using Lemma \ref{lem:large-deviations}
Part (ii).
\end{proof}

\section{Proof of main results}

\label{sec:proof-main-thm} 

Let $(\pi,V)\in\hat{G}$ be an irreducible representation of $G$,
$1\leq j,k\leq n\eqdf\dim V$, and $f:G\to\R$ have zero mean. The
randomness of the basis enters into the error term given in Proposition
\ref{prop:errorboundreal} via the quantities
\begin{align*}
\left\langle \frac{\mathrm{dim}V}{|G|}\sum_{g\in G}f(g)(\pi\otimes\check{\pi})(g)(v_{k}^{V}\otimes\check{v}_{k}^{V}),w_{j}^{V}\otimes\check{w}_{j}^{V}\right\rangle ,\\
\left\langle \frac{\mathrm{dim}V}{|G|}\sum_{g\in G}f(g)(\pi\otimes\pi)(g)(v_{k}^{V}\otimes v_{k}^{V}),w_{j}^{V}\otimes w_{j}^{V}\right\rangle .
\end{align*}
Accordingly, let $v\eqdf v_{k}^{V}$, $e_{i}\eqdf e_{i}^{V}$ and 

\begin{align}
x\eqdf\frac{\mathrm{dim}V}{|G|}\sum_{g\in G}f(g)(\pi\otimes\check{\pi})(g)(v\otimes\check{v}),\quad & y\eqdf\frac{\mathrm{dim}V}{|G|}\sum_{g\in G}f(g)(\pi\otimes\pi)(g)(v\otimes v).\label{eq:defnofx}
\end{align}
We write 
\begin{align}
x\eqdf\sum_{i,j=1}^{n}x_{ij}e_{i}\otimes\check{e}_{j},\quad & y\eqdf\sum_{i,j=1}^{n}y_{ij}e_{i}\otimes e_{j}\label{eq:expansionofx}
\end{align}
for some $x_{ij},\,y_{ij}\in\C$. The vectors $x$ and $y$ satisfy
the following properties.
\begin{lem}
\label{lem:propertiesofx} Let $x$ be defined as in \eqref{eq:defnofx}
and \eqref{eq:expansionofx}. Then, 
\begin{enumerate}
\item[(i)] $\sum_{i,j=1}^{n}|x_{ij}|^{2},\sum_{i,j=1}^{n}|y_{ij}|^{2}\leq\frac{\|f\|_{\ell^{2}}^{2}\dim V}{|G|}$,
and 
\item[(ii)] $\sum_{i=1}^{n}x_{ii}=0$. 
\end{enumerate}
\end{lem}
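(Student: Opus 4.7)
My plan is to handle (ii) first, which follows directly from the mean-zero hypothesis on $f$, and then address (i) by expressing the sum $\sum_{i,j}|x_{ij}|^2$ as a matrix coefficient of the right regular representation $\rho$ evaluated at a nonnegative function, at which point the bound reduces to a standard $\ell^1$-to-operator-norm estimate combined with Schur orthogonality.

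For (ii), I unfold the definitions. Using the Riesz identifications, the inner product on $\check V$ satisfies $\langle\check a,\check b\rangle_{\check V}=\overline{\langle a,b\rangle_V}$, and $\check\pi(g)\check v=\widecheck{\pi(g)v}$. A direct expansion gives
\[
x_{ij}=\frac{\dim V}{|G|}\sum_{g\in G}f(g)\,\langle\pi(g)v,e_i\rangle\,\overline{\langle\pi(g)v,e_j\rangle}.
\]
Setting $i=j$, summing over $i$, and applying Parseval to $\pi(g)v$ in the orthonormal basis $\{e_i\}$ yields $\sum_i|\langle\pi(g)v,e_i\rangle|^2=\|\pi(g)v\|^2=1$. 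Hence $\sum_i x_{ii}=(\dim V/|G|)\sum_g f(g)=0$ by the mean-zero assumption.

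For (i), I compute $\sum_{i,j}|x_{ij}|^2$ by squaring the expression above and performing the inner sums over $i$ and $j$ using the identity
\[
\sum_{i=1}^n\langle\pi(g)v,e_i\rangle\overline{\langle\pi(h)v,e_i\rangle}=\langle\pi(g)v,\pi(h)v\rangle=\langle\pi(h^{-1}g)v,v\rangle,
\]
which is just Parseval in $V$. This collapses the quadruple sum to
\[
\sum_{i,j=1}^n|x_{ij}|^2=\left(\frac{\dim V}{|G|}\right)^{2}\sum_{g,h\in G}f(g)\overline{f(h)}\,\psi(h^{-1}g),\qquad \psi(k):=|\langle\pi(k)v,v\rangle|^2.
\]
Substituting $k=h^{-1}g$ and recognizing $\sum_h f(hk)\overline{f(h)}=\langle\rho(k)f,f\rangle_{\ell^{2}(G)}$, the right-hand side becomes $(\dim V/|G|)^{2}\,\langle\rho(\psi)f,f\rangle$, where $\rho(\psi)=\sum_k\psi(k)\rho(k)$. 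Since $\psi\ge 0$, the triangle inequality gives $\|\rho(\psi)\|_{\mathrm{op}}\le\sum_k\psi(k)$, and Schur orthogonality applied to the matrix coefficient $k\mapsto\langle\pi(k)v,v\rangle$ identifies $\sum_k\psi(k)=|G|/\dim V$. Combining yields $\sum_{i,j}|x_{ij}|^2\le(\dim V/|G|)\|f\|_{\ell^{2}}^{2}$, as claimed.

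The argument for $y_{ij}=\frac{\dim V}{|G|}\sum_g f(g)\langle\pi(g)v,e_i\rangle\langle\pi(g)v,e_j\rangle$ is parallel. The same two Parseval applications now produce the function $\phi(k):=\langle\pi(k)v,v\rangle^{2}$ (no modulus) in place of $\psi$, but since $|\phi(k)|=\psi(k)$ the triangle inequality still gives $\|\rho(\phi)\|_{\mathrm{op}}\le|G|/\dim V$, and hence the same estimate on $\sum_{i,j}|y_{ij}|^2$. I do not expect any real obstacle: everything is standard Peter--Weyl/Schur bookkeeping, and the only substantive observation is that the positivity of $\psi$ (respectively the identity $|\phi|=\psi$ for the second case) is what converts a loose Cauchy--Schwarz bound into the sharp Schur-orthogonality bound with constant $|G|/\dim V$.
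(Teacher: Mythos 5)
Your proof is correct. For part (ii) you and the paper use exactly the same argument: expand the diagonal sum and apply Parseval to get $\sum_{i}|\langle\pi(g)v,e_i\rangle|^2=\|\pi(g)v\|^2=1$, then invoke the mean-zero hypothesis.

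For part (i) your route genuinely differs in presentation from the paper's, though the underlying estimate is the same. After both of you reduce to
\[
\sum_{i,j}|x_{ij}|^2 = \left(\frac{\dim V}{|G|}\right)^{2}\sum_{g,h}f(g)f(h)\,|\langle\pi(h^{-1}g)v,v\rangle|^{2},
\]
the paper applies AM--GM, $f(g)f(h)\le\tfrac12(f(g)^2+f(h)^2)$, uses the symmetry of the kernel to collapse to a single sum over $g$, and then evaluates $\sum_h|\langle\pi(g)v,\pi(h)v\rangle|^2=|G|/\dim V$ by Schur orthogonality. You instead interpret the double sum as the quadratic form $\langle\rho(\psi)f,f\rangle$ of the group-convolution operator $\rho(\psi)=\sum_k\psi(k)\rho(k)$, and bound $\|\rho(\psi)\|_{\mathrm{op}}\le\|\psi\|_{\ell^1}=|G|/\dim V$ by the triangle inequality together with Schur. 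These are the same computation in two disguises (the paper's symmetrization is a Schur test, which for a group-convolution kernel is exactly the $\ell^1$-to-operator-norm version of Young's inequality), but your framing via $\rho(\psi)$ is a bit cleaner and makes it transparent that the only structural input is $\|\psi\|_{\ell^1}=|G|/\dim V$. One small remark: the bound $\|\rho(\psi)\|_{\mathrm{op}}\le\sum_k|\psi(k)|$ does not actually require $\psi\ge0$ --- you in fact use it correctly in the $y$ case where $\phi$ is complex-valued and you pass to $|\phi|=\psi$ --- so your closing sentence slightly over-credits the positivity of $\psi$ as being the enabling observation; the real point is just that each $\rho(k)$ is unitary and $\|\psi\|_{\ell^1}$ is computable via Schur orthogonality.
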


\begin{proof}
Using \eqref{eq:expansionofx}, we see that $\sum_{i,j}|x_{ij}|^{2}=\left\langle x,x\right\rangle $,
and so computing this inner product with the expression \eqref{eq:defnofx},
we obtain
\begin{align*}
\sum_{i,j}|x_{ij}|^{2} & =\frac{(\dim V)^{2}}{|G|^{2}}\sum_{g}\sum_{h}f(g)f(h)\left\langle (\pi\otimes\check{\pi})(g)(v\otimes\check{v}),(\pi\otimes\check{\pi})(h)(v\otimes\check{v})\right\rangle \\
 & =\frac{(\dim V)^{2}}{|G|^{2}}\sum_{g}\sum_{h}f(g)f(h)\left|\left\langle \pi(g)v,\pi(h)v\right\rangle \right|^{2}\\
 & \leq\frac{(\dim V)^{2}}{|G|^{2}}\sum_{g}\sum_{h}\frac{|f(g)|^{2}+|f(h)|^{2}}{2}\left|\left\langle \pi(g)v,\pi(h)v\right\rangle \right|^{2}\\
 & =\frac{(\dim V)^{2}}{|G|}\sum_{g}|f(g)|^{2}\frac{1}{|G|}\sum_{h}\left|\left\langle \pi(g)v,\pi(h)v\right\rangle \right|^{2}\\
 & =\frac{\dim V}{|G|}\|f\|_{\ell^{2}}^{2},
\end{align*}
with the last equality following from Schur orthogonality. The same
bound holds for $y$ since 
\[
\sum_{i,j}|y_{ij}|^{2}=\frac{(\dim V)^{2}}{|G|^{2}}\sum_{g}\sum_{h}f(g)f(h)\left(\left\langle \pi(g)v,\pi(h)v\right\rangle \right)^{2}.
\]
To prove (ii), we see from \eqref{eq:expansionofx} that $\sum_{i}x_{ii}=\left\langle x,\sum_{i}e_{i}\otimes\check{e_{i}}\right\rangle $.
Computing this inner product with \eqref{eq:defnofx} we obtain
\begin{align*}
\sum_{i}x_{ii} & =\frac{\dim V}{|G|}\sum_{g}f(g)\sum_{i}\left\langle \pi(g)v,e_{i}\right\rangle \left\langle \check{\pi}(g)\check{v},\check{e_{i}}\right\rangle \\
 & =\frac{\dim V}{|G|}\sum_{g}f(g)\left\langle \pi(g)v,\sum_{i}\left\langle \pi(g)v,e_{i}\right\rangle e_{i}\right\rangle \\
 & =\frac{\dim V}{|G|}\sum_{g}f(g)\left\langle \pi(g)v,\pi(g)v\right\rangle \\
 & =\frac{\dim V}{|G|}\sum_{g}f(g)=0,
\end{align*}
since $f$ has mean zero.
\end{proof}
The following bound applies to the error terms that arise from random
basis elements coming from complex non-self-dual or quaternionic representations
(type 1 or type 2 in the previous language) in Proposition \ref{prop:errorboundreal}.
\begin{prop}
\label{prop:unitary} Let $(\pi,V)\in\hat{G}$ be an irreducible representation
of $G$ that is either complex non-self-dual or quaternionic. Then,
for any $t>0$ and indices $1\leq i,j\leq n\eqdf\dim V$,
\begin{align*}
\P_{u\in U(V)}\left(\left|\mathrm{Re}\left\langle \frac{\mathrm{dim}V}{|G|}\sum_{g\in G}f(g)(\pi\otimes\check{\pi})(g)(v_{i}^{V}\otimes\check{v}_{i}^{V}),ue_{j}^{V}\otimes\widecheck{ue_{j}^{V}}\right\rangle \right|\geq t\frac{\|f\|_{\ell^{2}}}{2\sqrt{|G|}}\right)\\
 & \hspace{-2cm}\leq6e^{-\frac{t\sqrt{\dim V}}{64}}+2e^{-\frac{\dim V}{6}},
\end{align*}
and 
\begin{align*}
\P_{u\in U(V)}\left(\left|\mathrm{Re}\left\langle \frac{\mathrm{dim}V}{|G|}\sum_{g\in G}f(g)(\pi\otimes\pi)(g)(v_{i}^{V}\otimes v_{i}^{V}),ue_{j}^{V}\otimes ue_{j}^{V}\right\rangle \right|\geq t\frac{\|f\|_{\ell^{2}}}{2\sqrt{|G|}}\right)\\
 & \hspace{-2cm}\leq6e^{-\frac{t\sqrt{\dim V}}{64}}+2e^{-\frac{\dim V}{6}}.
\end{align*}
\end{prop}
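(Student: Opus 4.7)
The plan is to recognize that the two probabilities in Proposition \ref{prop:unitary} are essentially direct applications of Lemma \ref{lem:complex-random-matrices}, with the two preliminary lemmas (Lemma \ref{lem:propertiesofx} and Lemma \ref{lem:complex-random-matrices}) doing all the hard work. The content is the correct bookkeeping and parameter matching.

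First, I would observe that in both the complex non-self-dual and the quaternionic case, the random model of $\S$\ref{sec:Random-basis-construction} chooses $w_j^V = ue_j^V$ for $u$ Haar distributed in $U(V)$. Setting $v \eqdf v_i^V$, $e_k \eqdf e_k^V$, and defining $x, y$ as in \eqref{eq:defnofx} and \eqref{eq:expansionofx}, we have
\[
\left\langle \frac{\dim V}{|G|}\sum_g f(g)(\pi\otimes\check\pi)(g)(v\otimes\check v),\; ue_j^V\otimes\widecheck{ue_j^V}\right\rangle = \langle x,\, ue_j\otimes \widecheck{ue_j}\rangle,
\]
and analogously for the second expression, which equals $\langle y, ue_j\otimes ue_j\rangle$. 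Since $|\mathrm{Re}(z)|\leq |z|$, bounding the real parts reduces to bounding $|\langle x, ue_j \otimes \widecheck{ue_j}\rangle|$ and $|\mathrm{Re}\langle y, ue_j\otimes ue_j\rangle|$, which match the left-hand sides of Parts 1 and 2 of Lemma \ref{lem:complex-random-matrices} respectively.

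Next, I would verify the hypotheses of Lemma \ref{lem:complex-random-matrices}. By Lemma \ref{lem:propertiesofx}, the coefficients of $x$ and $y$ both satisfy $\sum_{i,j}|x_{ij}|^2,\;\sum_{i,j}|y_{ij}|^2 \leq C$ with
\[
C \eqdf \frac{\|f\|_{\ell^2}^2 \dim V}{|G|},
\]
and moreover $\sum_i x_{ii} = 0$ as required for Part 1 (Part 2 needs no trace-zero assumption). So both hypotheses are in force.

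Finally, I would substitute $T \eqdf t\|f\|_{\ell^2}/(2\sqrt{|G|})$ into the conclusion of Lemma \ref{lem:complex-random-matrices} and compute the exponent. With $n = \dim V$,
\[
\frac{nT}{32\sqrt{C}} \;=\; \frac{\dim V \cdot \frac{t\|f\|_{\ell^2}}{2\sqrt{|G|}}}{32 \cdot \|f\|_{\ell^2}\sqrt{\dim V/|G|}} \;=\; \frac{t\sqrt{\dim V}}{64},
\]
so each application of Lemma \ref{lem:complex-random-matrices} yields the desired probability bound $6e^{-t\sqrt{\dim V}/64} + 2e^{-\dim V/6}$. There is no real obstacle here beyond matching parameters; the probabilistic work has been carried out in Lemma \ref{lem:complex-random-matrices} and the algebraic/representation-theoretic work in Lemma \ref{lem:propertiesofx}, with Schur orthogonality ensuring the key $\ell^2$ bound on $x$ and $y$.
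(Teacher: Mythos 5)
Your proposal is correct and follows exactly the same route as the paper: combine Lemma \ref{lem:propertiesofx} (which provides the $\ell^2$ bound and trace-zero property of $x$, $y$ via Schur orthogonality) with the two parts of Lemma \ref{lem:complex-random-matrices}, setting $C=\|f\|_{\ell^2}^2\dim V/|G|$ and $T=t\|f\|_{\ell^2}/(2\sqrt{|G|})$. Your exponent computation $\frac{nT}{32\sqrt{C}}=\frac{t\sqrt{\dim V}}{64}$ matches, so the argument is complete.
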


\begin{proof}
This follows by combining the respective parts of Lemma \ref{lem:complex-random-matrices}
and Lemma \ref{lem:propertiesofx}, with $C=\frac{\|f\|_{\ell^{2}}^{2}\dim V}{|G|}$
and $T=t\frac{\|f\|_{\ell^{2}}}{2\sqrt{|G|}}.$
\end{proof}
The next bound applies to the other error terms coming from real representations.
\begin{prop}
\label{prop:orthogonal}Let $(\pi,V)\in\hat{G}$ be a self-dual real
irreducible representation of $G$ and $1\leq i,j\leq n\eqdf\dim V$,
then for any $t>0,$
\begin{align*}
\P_{o\in O(V_{J})}\left(\left|\mathrm{Re}\left\langle \frac{\mathrm{dim}V}{|G|}\sum_{g\in G}f(g)(\pi\otimes\check{\pi})(g)(v_{i}^{V}\otimes\check{v}_{i}^{V}),oe_{j}^{V}\otimes\widecheck{oe_{j}^{V}}\right\rangle \right|\geq t\frac{\|f\|_{\ell^{2}}}{2\sqrt{|G|}}\right)\\
 & \hspace{-2cm}\leq6e^{-\frac{t\sqrt{\dim V}}{64}}+2e^{-\frac{\dim V}{12}}.
\end{align*}
\end{prop}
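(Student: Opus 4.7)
The plan is to mimic the proof of Proposition \ref{prop:unitary} but with Lemma \ref{lem:complex-random-matrices} replaced by its real counterpart, Lemma \ref{lem:real-random-matrices}. Note that only the $(\pi \otimes \check\pi)$ term appears in the statement (and no $(\pi\otimes\pi)$ term), which reflects the fact that the real-representation construction of $\S$\ref{subsec:Real-representations.} produces eigenbasis elements only of type 3, so the deterministic bound of Proposition \ref{prop:errorboundreal} in this case has a single summand.

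First I would check that the vector
\[
x \eqdf \frac{\dim V}{|G|}\sum_{g \in G} f(g)(\pi \otimes \check\pi)(g)(v_i^V \otimes \check v_i^V)
\]
actually lies in the real subspace $V_J \otimes_{\R} \check{V_J}$ of $V \otimes \check V$. This holds because $f$ is real-valued, $v_i^V \in V_J$ by construction, and $\pi(g)$ preserves $V_J$ since $J$ intertwines $\pi$ (dually, $\check\pi(g)$ preserves $\check{V_J}$). Expanding $x = \sum_{i,j} x_{ij}\, e_i^V \otimes \check{e_j^V}$ in the real orthonormal basis $\{e_i^V\}$ of $V_J$ therefore gives coefficients $x_{ij} \in \R$, and since $o e_j^V \in V_J$ we get
\[
\left\langle x,\, oe_j^V \otimes \widecheck{oe_j^V}\right\rangle \in \R,
\]
so the outer $\mathrm{Re}$ is redundant.

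The main step is then to verify the hypotheses of Lemma \ref{lem:real-random-matrices} for $\beta \eqdf x$ with the real inner product space taken to be $V_J$. The bound $\sum_{i,j} |x_{ij}|^2 \leq \frac{\|f\|_{\ell^2}^2 \dim V}{|G|}$ and the vanishing trace $\sum_i x_{ii} = 0$ are precisely parts (i) and (ii) of Lemma \ref{lem:propertiesofx}, whose proofs go through unchanged (they rely only on Schur orthogonality and the mean-zero assumption on $f$, both of which are insensitive to the reality of $\pi$). Setting
\[
C \eqdf \frac{\|f\|_{\ell^2}^2 \dim V}{|G|}, \qquad T \eqdf \frac{t\,\|f\|_{\ell^2}}{2\sqrt{|G|}},
\]
one computes $\sqrt{C} = \|f\|_{\ell^2}\sqrt{\dim V}/\sqrt{|G|}$ and hence
\[
\frac{nT}{32\sqrt{C}} = \frac{\dim V \cdot t}{64\sqrt{\dim V}} = \frac{t\sqrt{\dim V}}{64}.
\]
Plugging these choices into the conclusion of Lemma \ref{lem:real-random-matrices} (with $n = \dim V$) yields exactly the claimed bound $6 e^{-t\sqrt{\dim V}/64} + 2 e^{-\dim V/12}$.

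I do not anticipate any substantive obstacle: the argument is a transcription of the proof of Proposition \ref{prop:unitary} into the real-orthogonal setting, with the simplification that only the $\pi \otimes \check\pi$ piece appears. The one point deserving care is the verification that $x$ has real coefficients in the $\{e_i^V\}$ basis, so that Lemma \ref{lem:real-random-matrices} applies directly without the Hermitian-decomposition step used in Lemma \ref{lem:complex-random-matrices}.
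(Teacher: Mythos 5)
Your proposal is correct and follows essentially the same route as the paper: reduce to the vector $x$ expanded in the $V_J$-basis, invoke Lemma \ref{lem:propertiesofx} for the $\ell^2$ and trace conditions, and apply Lemma \ref{lem:real-random-matrices} with $C=\frac{\|f\|_{\ell^2}^2\dim V}{|G|}$ and $T=\frac{t\|f\|_{\ell^2}}{2\sqrt{|G|}}$. The one minor difference is that the paper does not verify that the coefficients $x_{ij}$ are real; it simply sets $\beta_{ij}=\mathrm{Re}(x_{ij})$ and uses $\sum_{ij}|\beta_{ij}|^2\le\sum_{ij}|x_{ij}|^2$, which is enough --- your reality observation (that $\pi(g)v_i^V\in V_J$ forces $x_{ij}\in\R$) is also correct but not needed for the argument.
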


\begin{proof}
Let
\begin{align*}
x\eqdf\frac{\mathrm{dim}V}{|G|}\sum_{g\in G}f(g)(\pi\otimes\check{\pi})(g)(v_{i}^{V}\otimes\check{v}_{i}^{V}).
\end{align*}
Expanding $x$ over the basis $\{e_{i}^{V}\otimes\check{e}_{j}^{V}\}_{i,j}$
of $V\otimes\check{V}$ we obtain 
\begin{align*}
x=\sum_{i,j=1}^{n}x_{ij}e_{i}^{V}\otimes\check{e}_{j}^{V},
\end{align*}
for some $x_{ij}\in\C$. Because in this case, the inner product is
extended from a real inner product on the real subspace $V_{J}$ (cf.
$\S\S\S$\ref{subsec:Real-representations.}), and all $oe_{j}\in V_{J}$,
we have 
\[
\mathrm{Re}\left\langle x,oe_{j}^{V}\otimes\widecheck{oe_{j}^{V}}\right\rangle =\langle\beta,oe_{j}^{V}\otimes\widecheck{oe_{j}^{V}}\rangle
\]
where $\beta=\sum_{i,j=1}^{n}\beta_{ij}e_{i}^{V}\otimes\check{e}_{j}^{V}$,
$\beta_{ij}\eqdf\mathrm{Re}(x_{ij})$. We thus have $\sum_{ij}|\beta_{ij}|^{2}\leq\sum_{ij}|x_{ij}|^{2}\leq\frac{\|f\|_{2}^{2}\dim V}{|G|}$
and $\sum_{i}\beta_{ii}=0$ using Lemma \ref{lem:propertiesofx}.
We can apply Lemma \ref{lem:real-random-matrices} to get the result.
\end{proof}
We are now ready to combine the probabilistic estimates of Propositions
\ref{prop:unitary} and \ref{prop:orthogonal} with the deterministic
error estimate of Proposition \ref{prop:errorboundreal} to prove
Theorem \ref{thm:main-theorem-random}.
\begin{proof}[Proof of Theorem \ref{thm:main-theorem-random}]
Recall the probability space $(X,\P)$ and the notation used for
the elements of the random eigenbasis constructed in $\S$\ref{sec:Random-basis-construction}.
For each $k=1,\ldots,M,$ we define $\tilde{f_{k}}=f_{k}-\frac{1}{|G|}\sum_{g\in G}f_{k}(g)$
and set
\begin{align*}
F_{1}(\pi,i,j,f_{k}) & \eqdf\left|\mathrm{Re}\left\langle \frac{\mathrm{dim}V}{|G|}\sum_{g\in G}\tilde{f_{k}}(g)(\pi\otimes\pi)(g)(v_{i}^{V}\otimes v_{i}^{V}),ue_{j}^{V}\otimes ue_{j}^{V}\right\rangle \right|,\\
F_{2}(\pi,i,j,f_{k}) & \eqdf\left|\mathrm{Re}\left\langle \frac{\mathrm{dim}V}{|G|}\sum_{g\in G}\tilde{f_{k}}(g)(\pi\otimes\check{\pi})(g)(v_{i}^{V}\otimes\check{v}_{i}^{V}),ue_{j}^{V}\otimes\widecheck{ue_{j}^{V}}\right\rangle \right|,\\
F_{3}(\pi,i,j,f_{k}) & \eqdf\left|\mathrm{Re}\left\langle \frac{\mathrm{dim}V}{|G|}\sum_{g\in G}\tilde{f_{k}}(g)(\pi\otimes\check{\pi})(g)(v_{i}^{V}\otimes\check{v}_{i}^{V}),oe_{j}^{V}\otimes\widecheck{oe_{j}^{V}}\right\rangle \right|,
\end{align*}
where in $F_{1}$ and $F_{2}$ we assume $V$ is not real and in $F_{3}$
we assume that $V$ is a real representation. In all cases, we may
assume that $\pi$ is non-trivial since this is a one dimensional
representation with corresponding eigenspace spanned by the constant
function for which the desired estimates trivially hold.

Let $\mathcal{E}_{t}$ denote the event that some $F_{1}(\pi,i,j,f_{k})$
or $F_{2}(\pi,i,j,f_{k})$ with $(\pi,V)$ complex or quaternionic,
or some $F_{3}(\pi,i,j,f_{k})$ with $(\pi,V)$ real satisfies 
\[
F_{\ell}(\pi,i,j,f_{k})>t\frac{\|\tilde{f}_{k}\|_{\ell^{2}}}{2\sqrt{|G|}}.
\]
By carrying out a union bound over all $\pi\in\hat{G}-\mathrm{triv}$,
all functions $f_{1},\ldots,f_{M}$ in the collection, and $1\leq i,j\leq\dim V$
with the estimates from Propositions \ref{prop:unitary} and \ref{prop:orthogonal},
we obtain
\begin{align}
\P(\mathcal{E}_{t}) & \leq2M\sum_{(\pi,V)\in\hat{G}-\mathrm{triv}}(\dim V)^{2}\left(6e^{-\frac{t\sqrt{\dim V}}{64}}+2e^{-\frac{\dim V}{12}}\right)\label{eq:proba-upper-bound}
\end{align}
Now assume we have a basis $\B\subset\ell^{2}(V(G))$ that is not
in $\mathcal{E}_{t}$, and let $\varphi\in\B$. Then, for any of the
functions $f_{k},$ since $\|\tilde{f}_{k}\|_{2}\leq\|f\|_{2}$ we
obtain from Proposition \ref{prop:errorboundreal} that
\begin{align*}
\left|\sum_{g\in G}f_{k}(g)|\varphi(g)|^{2}-\frac{1}{|G|}\sum_{g\in G}f_{k}(g)\right| & \leq t\frac{\|f_{k}\|_{\ell^{2}}}{\sqrt{|G|}}.
\end{align*}
This completes the proof.
\end{proof}
The proof of Theorem \ref{thm:main-thm-there-exists} is then immediate. 
\begin{rem}
\label{rem:complex-eiengbases} The proofs of Theorems \ref{thm:main-thm-there-exists}
and \ref{thm:main-theorem-random} simplify if one only wishes to
consider complex-valued eigenbases. The construction of these bases
is similar to $\S$\ref{sec:Random-basis-construction}. Indeed, given
an irreducible representation $(\pi,V)$ of $G$, let $\{v_{i}^{V}\}$
be an orthonormal basis of $V$ consisting of $\pi(A)$ eigenvectors
and let $\{w_{j}^{V}\}$ be any other orthonormal basis of $V$. The
collection 
\begin{align*}
\left\{ \check{w}_{j}^{V}\otimes v_{i}^{V}:i,j=1,\ldots,\dim V\right\} 
\end{align*}
forms an orthonormal basis of $\check{V}\otimes V$corresponding to
the following orthonormal adjacency operator eigenfunctions 
\[
\varphi_{i,j}^{V}(g)\eqdf\frac{\sqrt{\dim V}}{\sqrt{|G|}}\langle\pi(g)v_{i}^{V},w_{j}^{V}\rangle
\]
in $\ell^{2}(G)$. To randomize this basis, we fix an orthonormal
basis $\{e_{j}^{V}\}_{j}$ of $V$ and then given a Haar random unitary
operator $u\in U(V)$, we set $w_{j}^{V}=ue_{j}^{V}$ for each $j=1,\ldots,\dim V$.

The upper bound obtained in Proposition \ref{prop:errorboundreal}
for type 3 basis elements then holds for the collection $\{\varphi_{i,j}^{V}\}$
but with the real part in the upper bound replaced by the absolute
value; the proof of this is analogous. Expanding the vectors in the
inner product for this upper bound as in $\S$\ref{sec:proof-main-thm},
we recover Lemma \ref{lem:propertiesofx} identically. Thus, we can
combine Lemma \ref{lem:propertiesofx} and part 1 of Lemma \ref{lem:complex-random-matrices}
to prove the same probabilistic bound in the first part of Proposition
\ref{prop:unitary} (without the real part) for the $\varphi_{i,j}^{V}$.
Theorems \ref{thm:main-thm-there-exists} and \ref{thm:main-theorem-random}
then follow via a union bound over the irreducible representations
and basis vectors as in the proof of Theorem \ref{thm:main-theorem-random}.
In fact, in the complex-valued basis case, one may take the functions
to be complex-valued.
\end{rem}

\begin{proof}[Proof of Corollary \ref{cor:que-large-qr}]
 We use Theorem \ref{thm:main-thm-there-exists} with $t=64(\varepsilon+1)\frac{\log(|G|)}{\sqrt{\mathfrak{D}(G)}}$.
Then, 
\[
2M\sum_{(\pi,V)\in\hat{G}-\mathrm{triv}}(\dim V)^{2}\left(6e^{-\frac{t\sqrt{\dim V}}{64}}+2e^{-\frac{\dim V}{12}}\right)<12M|G|e^{-(\varepsilon+1)\log(|G|)}+4M|G|e^{-\frac{\mathfrak{D}(G)}{12}},
\]
and so requiring that both terms in this summation are less than $\frac{1}{2}$
gives the required bound on $M$ for a basis satisfying \eqref{eq:large_qr_bound}
to exist.
\end{proof}
\begin{proof}[Proof of Corollary \ref{cor:que-small-scale}]
 Since the collection of subsets $A_{i}$ satisfy the bound $c|G|^{1-\eta}\leq|A_{i}|\leq C|G|^{1-\eta}$
on their size, there are at most $\frac{1}{c}|G|^{\eta}$ of them.
We take $t=\frac{128\log|G|}{\sqrt{\mathfrak{D}(G)}}$ so that when
$|G|$ is sufficiently large (dependent only upon $c$ and $\eta$),
we have $12\frac{1}{c}|G|^{\eta-1}+4\frac{1}{c}|G|^{\eta+1}e^{-\frac{1}{12}|G|^{\eta+\varepsilon}}<1$.
Thus by Theorem \ref{thm:main-thm-there-exists} if one takes the
functions to be the at most $\frac{1}{c}|G|^{\eta}$ indicator functions
on the sets $A_{i}$, there exists an orthonormal eigenbasis $\mathcal{B}$
such that

\[
\left|\mu_{\varphi}[A_{i}]-\frac{|A_{i}|}{|G|}\right|\leq\frac{128\log|G|}{|G|^{\frac{1}{2}\eta+\frac{1}{2}\varepsilon}}\frac{\sqrt{|A_{i}|}}{\sqrt{|G|}}\leq\frac{128\log|G|}{\sqrt{c}|G|^{\frac{1}{2}\varepsilon}}\frac{|A_{i}|}{|G|},
\]
for every $\varphi\in\mathcal{B}$ and each set $A_{i}$, with the
last inequality following from $\sqrt{|A_{i}|}\leq\frac{|A_{i}|}{\sqrt{|A_{i}|}}\leq\frac{|A_{i}|}{\sqrt{c}|G|^{\frac{1}{2}-\frac{1}{2}\eta}}$.
\end{proof}
\begin{proof}[Proof of Proposition \ref{prop:que-perm-groups}]
 For $\mathrm{Sym}(n)$, we firstly note that the sign representation
is one-dimensional and the corresponding eigenfunctions are spanned
by the function assigning $1$ to even permutations and $-1$ to odd
permutations and thus these eigenfunctions already satisfy the QUE
bound exactly after normalization. When doing the randomization in
the proof of Theorem \ref{thm:main-theorem-random}, we thus only
require the union bound to run over the non-sign and non-trivial permutations.
In other words, for $\mathrm{Sym}(n)$, Theorem \ref{thm:main-thm-there-exists}
holds when 
\[
2M_{n}\sum_{(\pi,V)\in\widehat{\mathrm{Sym}(n)}-\mathrm{\{triv},\mathrm{\,sign}\}}(\dim V)^{2}\left(6e^{-\frac{t_{n}\sqrt{\dim V}}{64}}+2e^{-\frac{\dim V}{12}}\right)<1,
\]
instead. Now, consider $t_{n}=192\frac{\log(n-1)}{\sqrt{n-1}}$. Since
$\dim V\geq n-1$ for all non-trivial and non-sign irreducible representations
$(\pi,V)$ we have that $e^{-\frac{t_{n}\sqrt{\dim V}}{64}}\leq e^{-3\log(\dim V)}$.
Moreover, for $n\geq24$ and $(\pi,V)$ non-sign and non-trivial we
have $(\dim V)^{2}e^{-\frac{\dim V}{12}}\leq(\dim V)^{-1}n^{3}e^{-\frac{n}{12}}.$
It follows that 
\begin{align}
 & 2M_{n}\sum_{(\pi,V)\in\widehat{\mathrm{Sym}(n)}-\mathrm{\{triv},\mathrm{sign}\}}(\dim V)^{2}\left(6e^{-\frac{t_{n}\sqrt{\dim V}}{64}}+2e^{-\frac{\dim V}{12}}\right)\nonumber \\
< & 2M_{n}\left(\left(\sum_{(\pi,V)\in\widehat{\mathrm{Sym}(n)}}(\dim V)^{-1}\right)-2\right)\left(6+2n^{3}e^{-\frac{n}{12}}\right).\label{eq:probbd}
\end{align}
The quantity $\sum_{(\pi,V)\in\widehat{\mathrm{Sym}(n)}}(\dim V)^{-1}$
is precisely the Witten Zeta function of the symmetric group at $1$.
By \cite{Lu96,Mu.Pu02,Li.Sh04,Ga06} it is known that 
\[
\sum_{(\pi,V)\in\widehat{\mathrm{Sym}(n)}}(\dim V)^{-1}=2+O(n^{-1})
\]
 and so \eqref{eq:probbd} is $O(M_{n}(n^{-1}+n^{2}e^{-\frac{n}{12}}))$.
Thus, taking $M_{n}=o_{n\to\infty}(n)$ is sufficient for the existence
of a basis satisfying \eqref{eq:permutation_bound}.

In the case of $\mathrm{Alt}(n)$, we note that any irreducible representation
corresponds to two irreducible representations of $\mathrm{Sym}(n)$
and so $\sum_{(\pi,V)\in\widehat{\mathrm{Alt}(n)}}(\dim V)^{-1}=1+O(n^{-1})$.
In addition, $\mathfrak{D}(\mathrm{Alt}(n))\geq n-1$ and so an identical
argument to the case for $\mathrm{Sym}(n)$ (this time there is no
sign representation) gives the same result for $\mathrm{Alt}(n)$.
\end{proof}

\section*{Acknowledgements}

MM and JT were supported by funding from the European Research Council
(ERC) under the European Union’s Horizon 2020 research and innovation
programme (grant agreement No 949143). YZ was supported by funding
from NSF CAREER award DMS-2044606, a Sloan Research Fellowship, and
the MIT Solomon Buchsbaum Fund. We thank Sean Eberhard for useful
discussions about this project. We thank Assaf Naor, Ashwin Sah, and
Mehtaab Sawhney for their comments on a previous version of this paper
that helped us improve it.

{\footnotesize{}\bibliographystyle{amsalpha}
\bibliography{QUECayleyGraphs}
}{\footnotesize\par}

\noindent Michael Magee, \\
Department of Mathematical Sciences,\\
Durham University, \\
Lower Mountjoy, DH1 3LE Durham,\\
United Kingdom

\noindent \texttt{michael.r.magee@durham.ac.uk}~\linebreak{}

\noindent Joe Thomas, \\
Department of Mathematical Sciences,\\
Durham University, \\
Lower Mountjoy, DH1 3LE Durham,\\
United Kingdom

\noindent \texttt{joe.thomas@durham.ac.uk}~\linebreak{}

\noindent Yufei Zhao, \\
Department of Mathematics,\\
Massachusetts Institute of Technology, \\
Cambridge, MA 02139,\\
USA

\noindent \texttt{yufeiz@mit.edu}
\end{document}